\newtheorem{theorem}{Theorem}[section]
\newtheorem{definition}{Definition}
\newtheorem{example}{Example}
\newtheorem{lemma}{Lemma}
\newtheorem{proposition}{Proposition}
\newtheorem{remark}{Remark}
\numberwithin{equation}{section}
\begin{document}
\title{Surfaces and Hypersurfaces with Prescribed Radial Mean Curvature}

\author{\large \bf
Armando M. V. Corro\\
IME, Universidade Federal de Goi\'as \\
Caixa Postal 131, 74001-970, Goi\^ania, GO, Brazil\\
e-mail:corro@ufg.br
  \and
 \large \bf Marcelo Lopes Ferro    \\
 IME, Universidade Federal de Goi\'as \\
Caixa Postal 131, 74001-970, Goi\^ania, GO, Brazil\\
e-mail: marceloferro@ufg.br
 }
\date{}
\maketitle \thispagestyle{empty}

\begin{abstract}
In this work, we provide a local classification of certain special classes of surfaces determined by the prescription of the radial mean curvature in terms of the height and angle functions. Moreover, we introduce a special class of hypersurfaces, and we also provide a local classification of these three-dimensional hypersurfaces whose second mean curvature vanishes. Finally, we present a recursive method for constructing such hypersurfaces, extending the same curvature prescription approach to higher dimensions.
\end{abstract}
\maketitle
\textbf{keywords:}\,\,{Differential geometry, Surfaces, Hypersurfaces, radial mean curvature}

\section{Introduction}
\label{sec::Intro}
The study of the properties of curves in $\mathbb{R}^2$ can be regarded as a particular case of more general concepts in differential geometry, such as the prescription of the mean curvature in surfaces of revolution or in surfaces with a canonical principal direction (see \cite{Carretero2024,MunteanuNistor2011,Garnica2012,Lopez2019,CorroFerroPreprint2025}).

Motivated by the classification of surfaces with a canonical principal direction given in \cite{MunteanuNistor2011,Garnica2012}, we extend the notion of CPD surfaces to higher dimensions by introducing a new class of hypersurfaces in $\mathbb{R}^{n+1}$, called \emph{rotated translational hypersurfaces}. The hypersurfaces in this class are composed of two hypersurfaces: one is the directrix in $\mathbb{R}^{s+1}$, with $s < n$, and the other is the profile in $\mathbb{R}^{\,n-s+1}$.

The main result of this work is Theorem \ref{teorema1}, which establishes a general relation characterizing the class of rotated translational hypersurfaces of arbitrary dimension $n$, with $H_{n-1}=0$. Its three-dimensional version, Theorem \ref{teorema3}, provides a complete local classification of all such hypersurfaces with vanishing second mean curvature, representing the main geometric contribution of the paper. In addition, Theorem \ref{teorema2} introduces a recursive construction method that allows the generation of higher-dimensional examples from lower-dimensional ones.

As preliminary results, we also provide local classifications for \emph{CPD surfaces}, \emph{translational surfaces}, and \emph{harmonic surfaces of graphic type}, under the same curvature prescription approach. For further details on harmonic surfaces of graphic type, see \cite{RiverosCorroBarbosa2016,RiverosCorro2018}.

The paper is organized as follows. In Sect. \ref{sec:preliminares}, we define the rotated translational hypersurfaces and present the two main theorems. 
In Sect. \ref{sec:edo}, we establish two technical lemmas on ordinary differential equations which will be used in the following sections. 
In Sect. \ref{sec:superficie}, we provide local classifications of CPD, translational, and harmonic surfaces of graphic type under an appropriate curvature prescription. 
Sect. \ref{sec:paralela} contains important technical results on parallel hypersurfaces, which will be essential for the proofs in Sect. \ref{sec:RTH}.
Finally, in Sect. \ref{sec:RTH}, we prove the two main results and, as a consequence, give a local classification of all three-dimensional rotated translational hypersurfaces whose second mean curvature vanishes. 
The paper concludes with a recursive method for constructing higher-dimensional examples.

\section{Preliminaries and statement of the main results}\label{sec:preliminares}

In this section, we begin by presenting the concept of radial mean curvature, then provide the definition of a rotated translational hypersurface, and subsequently highlight the height function and the angle function, which will be used for the prescription of the radial mean curvature. Finally, we present the two main results of this work.\\

Let $M$ be a hypersurface of dimension $n$, with principal curvatures $\kappa_i$. 
The principal radial curvatures are given by $\displaystyle{\tfrac{1}{\kappa_i}}$, the mean radial curvature $H_R$ is the average of the principal radial curvatures, and the sum of the principal radial curvatures is denoted by $A_M$. From the previous definitions, we restrict ourselves to an open set, where $H_n\neq 0$, we have

\begin{eqnarray}
  A_M\,=\,n  H_R \,=\,n\frac{H_{n-1}}{H_n} =\sum_{i=1}^n \frac{1}{\kappa_i},\nonumber
\end{eqnarray}
where $H_r$ are the $r$-mean curvature of $M$, given by
\[
H_r = \frac{S_r(W)}{\binom{n}{r}},
\]
where, for integers $0 \leq r \leq n$, $S_r(W)$ is given by
\[
S_0(W) = 1, \qquad
S_r(W) = \sum_{1 \leq i_1 < \cdots < i_r \leq n} \kappa_{i_1}\cdots \kappa_{i_r}.
\].

\begin{definition}
 A hypersurface $M \subset \mathbb{R}^{n+1}$ is called a 
\textbf{rotated translational hypersurface} if there exist:

\begin{itemize}
    \item $P_{s+1}$, a fixed vector subspace of dimension $s+1$ in $\mathbb{R}^{n+1}$,  $1 \leq s \leq n-1$,
    \item $V \in P_{s+1}$, a constant unit vector, such that, $\displaystyle{P_{t+1} = P_{s+1}^{\perp} \oplus \text{span}\{V\}}$, a vector subspace of 
    dimension $t+1$ with $s+t=n$,
\end{itemize}
and two hypersurfaces:
\begin{itemize}
    \item $M_1$, a hypersurface in $P_{s+1}$, with Gauss map $N_1$,
    \item $M_2$, a hypersurface in $P_{t+1}$.
\end{itemize}
Such that the hypersurface $M$ is obtained, for each $p_1 \in M_1$, by applying
an orthogonal transformation that leaves invariant $P_{s+1}^{\perp}$, determined by 
$N_1(p_1)$, which maps $M_2$ into a hypersurface in
\[
P_{s+1}^{\perp} \oplus \text{span}\{N_1(p_1)\},
\]
followed by a translation also determined by $p_1$.   
\end{definition}

Note that, as defined above, the hypersurface $M$ is foliated by hypersurfaces contained in hyperplanes of dimension $t+1$, all of them isometric to $M_2$.

The hypersurfaces $M_1$ and $M_2$ are called, respectively, the directrix of $M$ and the profile of $M$.

\begin{remark}
   The Rotated translational hypersurfaces have the following properties:
\begin{enumerate}
    \item When $s = 1$ and $t = 1$, we obtain the surfaces with a canonical principal direction ( CPD surfaces ). For further details, see \cite{MunteanuNistor2011,Garnica2012,Lopez2019} .
    \item When $s = n-1$ and $t = 1$, we obtain a class of hypersurfaces with a canonical principal direction, studied by Palmas, see \cite{Garnica2012}.
    \item When $M_1 = S^{s}(r)$, the hypersurface $M$ is invariant under the subgroup of orthogonal transformations that leaves fixed $P_{s+1}^{\perp}$.
\end{enumerate}
\end{remark}

\begin{remark}
    Up to a rigid motion of $\mathbb{R}^{n+1}$ we may consider: $\{e_1,\ldots,e_{n+1}\}$ the canonical basis of $\mathbb{R}^{n+1}$, 
$P_{s+1} = \text{span}\{e_1,\ldots,e_{s+1}\} = \mathbb{R}^{s+1}$, 
$V = e_1$, 
$P_{t+1} = \text{span}\{e_1, e_{s+2}, \ldots, e_{s+t+1}\}$, 
which we can identify with $\mathbb{R}^{t+1}$. 

Then, the directrix hypersurface, the profile hypersurface, and the rotated translational 
hypersurface $M$ can be locally parametrized by
\begin{eqnarray}
    &&Y(u) = (Y_1(u), \ldots, Y_{s+1}(u)), 
\quad u = (u_1,\ldots,u_s) \in U \subset \mathbb{R}^s,\label{eqY}\\
&&Z(v) = (Z_1(v), \ldots, Z_{t+1}(v)), 
\quad v = (v_1,\ldots,v_t) \in V \subset \mathbb{R}^t,\label{eqZ}\\
&&X(u,v) = Y(u) + Z_1(v)N^Y(u) 
        + \sum_{r=2}^{t+1} Z_r(v) e_{s+r}, 
        \qquad s+t=n,\label{eqX}
\end{eqnarray}
where $N^Y$ denotes the Gauss normal map of $Y$, and $X$ is defined on the points of $U \times V$ such that $1 - Z_1 \kappa_i \neq 0$ for all $1 \leq i \leq s$, being $\kappa_i$ the principal curvatures of the directrix $M^s_1$.

\end{remark}

One of the purposes of this work is to classify \emph{Rotated Translational Hypersurfaces} with the prescription of $H_R$ in terms of the 
\begin{enumerate}
    \item []\hspace{2cm} height function $X_1=\big<X,e_1\big>$,
    \item[]\hspace{2cm} angle function $N_1=\big<N,e_1\big>$.
\end{enumerate}
In terms of these functions, the prescription of $H_R$, in an open set where $H_n\neq 0$, is given by
\[N_1A_X=aX_1+b,\quad\text{where}\quad a,b\in\mathbb{R},\quad A_X=nH_R.\]

\begin{remark}
    If $a=b=0$, then the hypersurface has zero $(n-1)$-mean curvature, that is, $H_{n-1}=0$. 
In the case with $n=2$, this prescription yields CPD minimal surfaces. The surface obtained in this case is the catenoid, as expected, since Munteanu and Nistor proved in \cite{MunteanuNistor2011} that the only minimal surface of CPD in $\mathbb{R}^3$, besides the plane, is the catenoid.
.
\end{remark}

In what follows, whenever the prescription of the radial mean curvature $H_R$ is mentioned, 
it is understood that this takes place on an open subset of $M$ where $H_n \neq 0$.\\

The following theorems will be proved in Section \ref{sec:RTH}.

\vspace{0.2cm}

\begin{theorem}\label{teorema1}
Let $M^n$ be a rotated translational hypersurface of dimension $n \geq 3$, whose profile is a curve. Then, the $(n-1)$-mean curvature of $M^n$ is zero, that is, $H_{n-1}=0$, if and only if, there exist constants $C_1 > 0$, $C_2 \in \mathbb{R}$, such that the directrix of $M^n$ is either a hypersurface $M_1^{\,n-1}$ with vanishing $(n-2)$-mean curvature, or a $(n-1)$-sphere of radius $r$, $S^{n-1}(r)$, and the profile of $M$ can be locally parameterized by
\begin{eqnarray}\label{Zteorema1}
    Z(\theta)=\begin{cases}
   \left(C_1\sec^{n-1}\theta\,,\,C_2+(n-1)C_1\int\sec^{n-1}\theta d\theta\right) , &\text{if}\quad M_1^{n-1}\neq \mathbb{S}^{n-1}(r)  ,\\[1.2em]
    \left(-r+C_1\sec^{n-1}\theta\,,\,C_2+(n-1)C_1\int\sec^{n-1}\theta d\theta\right) , &\text{if}\quad M_1^{n-1}= \mathbb{S}^{n-1}(r)
\end{cases}
\end{eqnarray} 
where $\theta \in \left(-\tfrac{\pi}{2},\,\tfrac{\pi}{2}\right)$.

\end{theorem}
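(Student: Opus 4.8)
The plan is to work in the explicit parametrization \eqref{eqX} with the profile a curve, so that $t=1$, $s=n-1$, and to choose $u=(u_1,\dots,u_{n-1})$ to be a line-of-curvature parametrization of the directrix $Y$, so that $(N^Y)_{u_i}=-\kappa_i\,Y_{u_i}$. Differentiating \eqref{eqX} then produces the adapted frame
\[
X_{u_i}=(1-Z_1\kappa_i)\,Y_{u_i}\quad(1\le i\le n-1),\qquad X_v=Z_1'\,N^Y+Z_2'\,e_{n+1}.
\]
Since each $Y_{u_i}$ is orthogonal to both $N^Y$ and $e_{n+1}$, the unit normal of $M$ lies in $\mathrm{span}\{N^Y,e_{n+1}\}$. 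Introducing the angle function $\theta$ of the profile by $Z_1'=W\sin\theta$, $Z_2'=W\cos\theta$ with $W=\sqrt{(Z_1')^2+(Z_2')^2}$, I would take $N=-\cos\theta\,N^Y+\sin\theta\,e_{n+1}$; this also records the relation $N_1=-\cos\theta\,N_1^Y$ between the angle functions of $M$ and of the directrix, tying the set-up to the prescription written in terms of $N_1$ and $X_1$.

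Next I would check that the coordinate directions are principal. Because $\theta$ depends only on $v$ and $N^Y$ only on $u$, a short computation gives $N_{u_i}=\dfrac{\cos\theta\,\kappa_i}{1-Z_1\kappa_i}\,X_{u_i}$ and $N_v=\dfrac{\theta'}{W}\,X_v$, so the shape operator is diagonal in $\{X_{u_1},\dots,X_{u_{n-1}},X_v\}$, with principal curvatures
\[
\lambda_i=-\cos\theta\,\tilde\kappa_i,\quad \tilde\kappa_i=\frac{\kappa_i}{1-Z_1\kappa_i},\qquad \lambda_n=-\frac{\theta'}{W}.
\]
Here the $\tilde\kappa_i$ are exactly the principal curvatures of the parallel hypersurface $M_1^{Z_1}$ of the directrix, so the parallel-hypersurface identities of Sect.~\ref{sec:paralela} apply directly.

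The heart of the argument is the curvature condition. Splitting $S_{n-1}(\lambda_1,\dots,\lambda_n)=\prod_{i=1}^{n-1}\lambda_i+\lambda_n\,S_{n-2}(\lambda_1,\dots,\lambda_{n-1})$ and factoring out the common power of $-\cos\theta$, the equation $H_{n-1}=0$ becomes
\[
\cos\theta\,\tilde S_{n-1}+\frac{\theta'}{W}\,\tilde S_{n-2}=0,\qquad \tilde S_r=S_r(\tilde\kappa_1,\dots,\tilde\kappa_{n-1}).
\]
Since $H_n\neq0$ forces $\tilde S_{n-1}=\prod\tilde\kappa_i\neq0$, I divide by it and use the parallel-hypersurface identity $\tilde S_{n-2}/\tilde S_{n-1}=\sum_i 1/\tilde\kappa_i=A_{M_1}-(n-1)Z_1$ to reduce the condition to
\[
\cos\theta+\frac{\theta'}{W}\bigl(A_{M_1}-(n-1)Z_1\bigr)=0.
\]
As $A_{M_1}=\sum_i 1/\kappa_i$ depends only on $u$ while every other term depends only on $v$, and $\theta'/W\not\equiv0$ for a non-degenerate profile, separation of variables forces $A_{M_1}$ to be constant, say $A_{M_1}=(n-1)c$. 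When $c=0$ this reads $S_{n-2}(\kappa)=0$, i.e.\ the directrix has $H_{n-2}=0$; when $c\neq0$ the directrix has constant sum of radial curvatures, whose umbilic model is the sphere $S^{n-1}(r)$ with $c=-r$.

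With $A_{M_1}$ constant the displayed equation collapses to the profile ODE $\cos\theta=(n-1)(Z_1-c)\,\theta'/W$; combined with $Z_1'=W\sin\theta$ and $Z_2'=W\cos\theta$ this gives $dZ_1/d\theta=(n-1)\tan\theta\,(Z_1-c)$ and $dZ_2/d\theta=(n-1)(Z_1-c)$, which integrate (via the ODE lemmas of Sect.~\ref{sec:edo}) to $Z_1=c+C_1\sec^{n-1}\theta$ and $Z_2=C_2+(n-1)C_1\int\sec^{n-1}\theta\,d\theta$, reproducing \eqref{Zteorema1} with $c=0$ and $c=-r$ in the two cases; the converse is a direct substitution back into the factored condition. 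I expect the main obstacle to be two-fold: keeping the parallel-hypersurface symmetric-function bookkeeping (expressing $\tilde S_{n-1},\tilde S_{n-2}$ through $S_{n-1}(\kappa),S_{n-2}(\kappa)$ and $Z_1$) clean enough to isolate the factor $A_{M_1}-(n-1)Z_1$, and, in the case $c\neq0$, establishing that constancy of $\sum_i 1/\kappa_i$ singles out the sphere — the genuinely delicate classification step, the remainder being separation of variables and a standard integration.
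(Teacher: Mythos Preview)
Your setup and computation of the principal curvatures are essentially what the paper packages as Proposition~\ref{Prop1} and Proposition~\ref{Prop2}, and your separation-of-variables step leading to $A_{M_1}=(n-1)c$ constant together with the profile ODE is correct and matches the paper's equation~\eqref{eqhteo1}.

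The genuine gap is your treatment of the case $c\neq0$. You plan to prove that constancy of $\sum_i 1/\kappa_i$ forces the directrix to be a sphere, calling this ``the genuinely delicate classification step''. But this statement is false: any parallel hypersurface of an $(n-1)$-dimensional hypersurface with $H_{n-2}=0$ has $\sum_i 1/\kappa_i$ equal to a nonzero constant, so there are plenty of non-spherical examples and no such classification exists. The paper's route around this is the elementary observation (Lemma~\ref{lema1}) that if the directrix $Y$ is \emph{not totally umbilical} and has $H^Y_{n-2}/H^Y_{n-1}=c$, then its parallel $\widetilde Y=Y+cN^Y$ satisfies $\widetilde H_{n-2}=0$; since $N^{\widetilde Y}=N^Y$, one rewrites
\[
X=Y+Z_1N^Y+Z_2\,e_{n+1}=\widetilde Y+(Z_1-c)\,N^{\widetilde Y}+Z_2\,e_{n+1},
\]
so the \emph{same} $M^n$ is a rotated translational hypersurface with directrix $\widetilde Y$ (now with vanishing $(n-2)$-mean curvature) and profile shifted by $c$. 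This absorbs the constant $c$ into the profile and reduces the entire non-umbilical case to $c=0$, yielding the first line of \eqref{Zteorema1}. The sphere enters only as the totally umbilical case, where the parallel at distance $c=r$ degenerates to a point and the reduction is unavailable; it is treated separately and gives the second line. So the classification step you anticipate is neither needed nor true; the missing idea is simply to replace the directrix by its parallel.
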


\vspace{0.2cm}

\begin{theorem}\label{teorema2}
Let $M^n$ be a rotated translational hypersurface parametrized by (\ref{eqX}) 
on an open subset where $H_n \neq 0$, whose directrix and profile hypersurfaces 
are parametrized by (\ref{eqY}) and (\ref{eqZ}), respectively. Then
\[N^{X}_{1}A_{X} \;=\; aX_{1}+b\]
if and only if there exists a constant $C \in \mathbb{R}$ such that 
 \begin{eqnarray}
     N^{\widetilde{Y}}_{1}A_{\widetilde{Y}} &=& a\widetilde{Y}_{1}+b, \quad\text{where}\quad \widetilde{Y}=Y-\frac{C}{s+a}N^Y,\nonumber\\
N^{Z}_{1}A_Z &=& (a+s)Z_{1}+C,\nonumber
 \end{eqnarray}
where $N^X$, $N^Y$ and $N^Z$ denote the Gauss maps of $X$, $Y$ and $Z$, respectively, 
and $X_1 = \langle X, e_1\rangle$, $Y_1 = \langle Y, e_1\rangle$, $Z_1 = \langle Z, e_1\rangle$, 
with $a,b \in \mathbb{R}$ constants.
\end{theorem}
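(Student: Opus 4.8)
The plan is to build an adapted frame for $X$ directly out of the frames of the directrix $Y$ and profile $Z$, diagonalise the shape operator of $X$ in it, and then reduce the prescription $N^{X}_{1}A_{X}=aX_{1}+b$ to a separated identity in the variables $u$ and $v$. First I would pass to local principal coordinates for $Y$ and for $Z$ simultaneously, so that $N^{Y}_{u_i}=-\kappa_i^{Y}Y_{u_i}$ and $N^{Z}_{v_j}=-\kappa_j^{Z}Z_{v_j}$. Differentiating (\ref{eqX}) then gives $X_{u_i}=(1-Z_1\kappa_i^{Y})Y_{u_i}$ and $X_{v_j}=Z_{1,v_j}N^{Y}+\sum_{r=2}^{t+1}Z_{r,v_j}e_{s+r}$. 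A short orthogonality check, using $Y_{u_i}\perp N^{Y}$ and $Y_{u_i}\perp e_{s+r}$ for $r\ge 2$, shows that the candidate field $N^{X}=N^{Z}_{1}N^{Y}+\sum_{r=2}^{t+1}N^{Z}_{r}e_{s+r}$ is a unit vector orthogonal to every $X_{u_i}$ and $X_{v_j}$, hence is the Gauss map of $X$. From this I read off the two scalar identities that drive everything: $N^{X}_{1}=N^{Z}_{1}N^{Y}_{1}$ and $X_{1}=Y_{1}+Z_{1}N^{Y}_{1}$.

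The main computational step is the diagonalisation. Differentiating $N^{X}$ and invoking the principal-coordinate relations yields $N^{X}_{u_i}=-N^{Z}_{1}\kappa_i^{Y}Y_{u_i}$ and $N^{X}_{v_j}=-\kappa_j^{Z}X_{v_j}$; together with the expressions for $X_{u_i},X_{v_j}$ and the orthogonality $X_{u_i}\perp X_{v_j}$, this shows that $\{X_{u_i},X_{v_j}\}$ is an orthogonal frame diagonalising the shape operator, with principal curvatures
\[
\kappa_i^{X}=\frac{N^{Z}_{1}\kappa_i^{Y}}{1-Z_1\kappa_i^{Y}}\ (1\le i\le s),\qquad \kappa_{s+j}^{X}=\kappa_j^{Z}\ (1\le j\le t).
\]
The domain condition $1-Z_1\kappa_i^{Y}\neq 0$ is exactly what keeps these well defined, and I would phrase the $u$-block through the parallel-hypersurface results of Section \ref{sec:paralela}, since for each fixed $v$ the map $u\mapsto Y(u)+Z_1(v)N^{Y}(u)$ is a parallel hypersurface of $Y$. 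Summing reciprocals then gives the clean formula
\[
A_{X}=\frac{A_{Y}-sZ_{1}}{N^{Z}_{1}}+A_{Z}.
\]

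With the formulas for $N^{X}_{1}$ and $A_{X}$ in hand the statement becomes purely algebraic. Multiplying $A_X$ by $N^{X}_{1}=N^{Z}_{1}N^{Y}_{1}$ and substituting $X_{1}=Y_{1}+Z_{1}N^{Y}_{1}$, the prescription $N^{X}_{1}A_{X}=aX_{1}+b$ is equivalent to
\[
N^{Y}_{1}\big[\,A_{Y}+N^{Z}_{1}A_{Z}-(a+s)Z_{1}\,\big]=aY_{1}+b .
\]
The bracket splits into a $u$-part $A_{Y}$ and a $v$-part $N^{Z}_{1}A_{Z}-(a+s)Z_{1}$. Since $N^{Y}_{1},A_{Y},Y_{1}$ depend only on $u$ and the remaining term only on $v$, evaluating this identity at two values of $v$ and subtracting gives $N^{Y}_{1}\,[\,g(v)-g(v')\,]=0$; as $N^{Y}_{1}\not\equiv 0$ on the open set, the $v$-part is forced to equal a constant $C$. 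This is precisely the profile equation $N^{Z}_{1}A_{Z}=(a+s)Z_{1}+C$, after which what remains is $N^{Y}_{1}(A_{Y}+C)=aY_{1}+b$.

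To finish I would recognise the last equation as the directrix condition for the parallel hypersurface $\widetilde{Y}=Y-\tfrac{C}{s+a}N^{Y}$. Using again the parallel-hypersurface formulas, $N^{\widetilde{Y}}_{1}=N^{Y}_{1}$, $A_{\widetilde{Y}}=A_{Y}+\tfrac{sC}{s+a}$ and $\widetilde{Y}_{1}=Y_{1}-\tfrac{C}{s+a}N^{Y}_{1}$, a one-line substitution shows that $N^{\widetilde{Y}}_{1}A_{\widetilde{Y}}=a\widetilde{Y}_{1}+b$ is algebraically identical to $N^{Y}_{1}(A_{Y}+C)=aY_{1}+b$. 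Both implications rest on exactly these identities, so the converse is obtained by reading the chain backwards with the chosen $C$. I expect the genuine obstacle to be the diagonalisation step, namely verifying that no mixed second-fundamental-form terms survive and that the $u$-block curvatures emerge in the parallel-hypersurface normalisation; once the two displayed formulas for $N^{X}_{1}A_{X}$ and $A_{X}$ are secured, the separation of variables and the parallel-hypersurface bookkeeping are routine.
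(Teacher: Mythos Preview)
Your proposal is correct and follows essentially the same route as the paper: you rederive inline the content of Proposition~\ref{Prop1}, Proposition~\ref{Prop2}, and Lemma~\ref{lema2} (the principal curvatures, the formula $A_X=(A_Y-sZ_1)/N^Z_1+A_Z$, and the parallel-hypersurface bookkeeping), and then perform the identical separation-of-variables argument that the paper does after invoking those results. The only cosmetic difference is that the paper isolates the $u$-only side first, writing $-N^{Y}_{1}A_{Y}+aY_{1}+b=N^{Y}_{1}\big(N^{Z}_{1}A_{Z}-(a+s)Z_{1}\big)$, while you keep $A_{Y}$ inside the bracket and subtract at two values of $v$; both extract the same constant $C$ under the tacit nondegeneracy $N^{Y}_{1}\not\equiv 0$.
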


\vspace{0.2cm}

\begin{remark}
    As an application of Theorem \ref{teorema1}, we will classify all 3-dimensional rotated translational hypersurfaces with $H_2=0$. Moreover, as an application of Theorem \ref{teorema2}, we will set up a recursive process to construct rotated translational hypersurfaces $M^n$ with prescribed $A_M$ in terms of the height function and the angle function.
\end{remark}

\section{Auxiliary results on ordinary differential equations}\label{sec:edo}

In this section we present two lemmas that provides explicit solutions for a special type of ordinary differential equations. This results will be of fundamental importance in the following sections.

Precisely, we present a result that provides us with more suitable coordinates to classify, in the following sections, certain surfaces and hypersurfaces with the prescription of the radial mean curvature in terms of the height and angle functions.

\vspace{0.3cm}

\begin{lemma}\label{lemaedo}
    Let $\widetilde{a}$, $\widetilde{b}$, $\widetilde{C}$ be constants and let $f:I\subseteq\mathbb{R}\to\mathbb{R}$ be a $C^2$ function satisfying
\begin{eqnarray}
    \sqrt{1+f'^{2}}\left(\frac{1+f'^{2}}{f''} - \widetilde{a} f - \widetilde{b}\right)=\widetilde{C}.\label{eqhpreliminar}
\end{eqnarray}
Then, there exists a change of coordinates $u=u(\theta)$ and constants $\widetilde{C}_1$, $\widetilde{C}_2\in\mathbb{R}$, such that $h$ is given by
\begin{equation}\label{solEDO}
\hspace{-0.3cm}
\begin{cases}
\begin{alignedat}{2}
& f(u)\,=\,\widetilde{C}_1 \sec^{\widetilde{a}}\theta \;-\; \tfrac{\widetilde{C}}{\widetilde{a}+1}\cos\theta \;-\; \tfrac{\widetilde{b}}{\widetilde{a}}, \\[0.3em]
& u(\theta)\,=\,\widetilde{C}_2 \;+\; \tfrac{\widetilde{C}}{\widetilde{a}+1}\sin\theta \;+\; \widetilde{a}\,\widetilde{C}_1 \!\int \sec^{\widetilde{a}}\theta\,d\theta
\end{alignedat}
& \widetilde{a} \in \mathbb{R}\setminus\{-1,0\}, \\[1.2em]
\begin{alignedat}{2}
& f(u)\,=\,\widetilde{C}_1 \;-\; \widetilde{C}\cos\theta \;-\; \widetilde{b}\,\ln(\cos\theta) \\[0.3em]
&  u(\theta)\,=\,\widetilde{C}_2 \;+\; \widetilde{b}\,\theta \;+\; \widetilde{C}\,\sin\theta
\end{alignedat}
& \widetilde{a}=0, \\[1.2em]
\begin{alignedat}{2}
& f(u)\,=\,\widetilde{b} \;+\; \widetilde{C}_1\cos\theta \;-\; \widetilde{C}\,\cos\theta\,\ln(\cos\theta), \\[0.3em]
& u(\theta)\,=\,\widetilde{C}_2 - \widetilde{C}_1\sin\theta + \widetilde{C}\!\left[\sin\theta\,\ln(\cos\theta)+\ln\!\big(\sec\theta+\tan\theta\big)\right]
\end{alignedat}
& \widetilde{a}=-1.
\end{cases}
\end{equation}
with $\displaystyle{\theta\in J\subseteq\left(-\frac{\pi}{2}\,,\,\frac{\pi}{2}\right)}$, where
\begin{eqnarray}\label{J}
   J=\begin{cases}
    \bigg\{\theta\in\bigg(\frac{-\pi}{2}\,,\,\frac{\pi}{2}\bigg)\,\bigg|\,\widetilde{a}\widetilde{C}_1\sec^{\widetilde{a}+1}\theta+\frac{\widetilde{C}}{\widetilde{a}+1}\neq0 \bigg\}, &\text{if}\quad \widetilde{a} \in \mathbb{R}\setminus\{-1,0\} ,\\[1.2em]
    \bigg\{\theta\in\bigg(\frac{-\pi}{2}\,,\,\frac{\pi}{2}\bigg)\,\big|\,\widetilde{b}\sec\theta+\widetilde{C}\neq0\bigg\}, &\text{if}\quad \widetilde{a}=0 ,\\[1.2em]
\bigg\{\theta\in\bigg(\frac{-\pi}{2}\,,\,\frac{\pi}{2}\bigg)\,\big|\,\widetilde{C}\big(1+\ln\cos\theta\big)-\widetilde{C}_1\neq0\bigg\}, &\text{if}\quad \widetilde{a}=-1,
\end{cases}
\end{eqnarray} 
\end{lemma}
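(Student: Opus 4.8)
The plan is to linearize equation \eqref{eqhpreliminar} by introducing the angle $\theta$ through the substitution $f'(u)=\tan\theta$. Since $f''$ appears in the denominator of \eqref{eqhpreliminar}, necessarily $f''\neq 0$, so $f'$ is strictly monotone and $\theta=\arctan f'$ defines a legitimate local change of coordinates with $\theta\in(-\tfrac{\pi}{2},\tfrac{\pi}{2})$. Under this substitution $\sqrt{1+f'^2}=\sec\theta$, and since $f''=\sec^2\theta\,\tfrac{d\theta}{du}$ one computes $\tfrac{1+f'^2}{f''}=\tfrac{du}{d\theta}$. Substituting into \eqref{eqhpreliminar} and dividing by $\sec\theta$ converts the nonlinear second-order equation into the clean first-order relation $\tfrac{du}{d\theta}=\widetilde{a}f+\widetilde{b}+\widetilde{C}\cos\theta$.

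Next I would use the chain rule $\tfrac{df}{d\theta}=f'\,\tfrac{du}{d\theta}=\tan\theta\,\tfrac{du}{d\theta}$ together with the relation just obtained to eliminate $u$ and produce a single linear first-order ODE for $f$ as a function of $\theta$:
\[
\frac{df}{d\theta}-\widetilde{a}\tan\theta\,f=\widetilde{b}\tan\theta+\widetilde{C}\sin\theta.
\]
This is the heart of the argument: the awkward nonlinearity has become a standard linear equation whose integrating factor is $\cos^{\widetilde{a}}\theta$. Multiplying through and integrating gives $f\cos^{\widetilde{a}}\theta$ as an antiderivative of $\widetilde{b}\cos^{\widetilde{a}-1}\theta\sin\theta+\widetilde{C}\cos^{\widetilde{a}}\theta\sin\theta$, after which dividing by $\cos^{\widetilde{a}}\theta$ recovers $f(\theta)$.

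The case distinction in \eqref{solEDO} then emerges naturally from the antiderivatives $\int\cos^{\widetilde{a}-1}\theta\sin\theta\,d\theta$ and $\int\cos^{\widetilde{a}}\theta\sin\theta\,d\theta$. For $\widetilde{a}\notin\{-1,0\}$ both are powers of cosine, producing the $\sec^{\widetilde{a}}\theta$ and $\cos\theta$ terms; when $\widetilde{a}=0$ the first integral degenerates to $-\ln\cos\theta$, and when $\widetilde{a}=-1$ the second integral degenerates to $-\ln\cos\theta$, which accounts for the logarithmic terms in those two rows. With $f(\theta)$ in hand I would substitute it back into $\tfrac{du}{d\theta}=\widetilde{a}f+\widetilde{b}+\widetilde{C}\cos\theta$; the $\widetilde{b}$-terms cancel and the cosine terms combine to give $\tfrac{du}{d\theta}=\widetilde{a}\widetilde{C}_1\sec^{\widetilde{a}}\theta+\tfrac{\widetilde{C}}{\widetilde{a}+1}\cos\theta$ in the generic case, and analogous expressions in the two special cases, and a final integration yields $u(\theta)$.

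I expect the most delicate computation to be the $\widetilde{a}=-1$ row, where integrating $\tfrac{du}{d\theta}$ requires evaluating $\int\cos\theta\,\ln\cos\theta\,d\theta$ by parts; this is what produces the combined term $\sin\theta\,\ln(\cos\theta)+\ln(\sec\theta+\tan\theta)$. Finally I would confirm that the reparametrization is genuinely a local diffeomorphism: factoring $\cos\theta$ out of each expression for $\tfrac{du}{d\theta}$ shows that $\tfrac{du}{d\theta}\neq 0$ exactly on the set $J$ in \eqref{J}, since $\cos\theta>0$ throughout $(-\tfrac{\pi}{2},\tfrac{\pi}{2})$. This justifies inverting $u=u(\theta)$ and completes the identification of $f$ with the stated closed forms.
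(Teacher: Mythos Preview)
Your proposal is correct and follows essentially the same route as the paper: substitute $f'=\tan\theta$, rewrite \eqref{eqhpreliminar} as $\tfrac{du}{d\theta}=\widetilde{a}f+\widetilde{b}+\widetilde{C}\cos\theta$, combine with $\tfrac{df}{d\theta}=\tan\theta\,\tfrac{du}{d\theta}$ to obtain the linear first-order ODE, solve it in the three cases, and then integrate to recover $u(\theta)$ and the domain $J$. Your displayed linear equation with right-hand side $\widetilde{b}\tan\theta+\widetilde{C}\sin\theta$ is in fact the correct one (the paper's intermediate display drops the $\cos\theta$ and $\sin\theta$ factors by a slip, though its final formulas agree with yours).
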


\begin{proof}
    Consider the equation
    \begin{eqnarray}
    \sqrt{1+f'^{2}}\left(\frac{1+f'^{2}}{f''} - \widetilde{a} f - \widetilde{b}\right)=\widetilde{C}.\nonumber
\end{eqnarray}
Let $f'=\tan\theta$, with $\theta=\theta(u)\in J\subseteq\bigg(\tfrac{-\pi}{2},\,\tfrac{\pi}{2}\bigg)$, where $\displaystyle{J=\bigg\{\theta\in\bigg(\tfrac{-\pi}{2},\,\tfrac{\pi}{2}\bigg)\,\bigg|\,\theta'\neq0\bigg\}}.$\\ 
Thus, $f''=\theta'\sec^2\theta$ and the equation above can be written as
\begin{eqnarray}
    \frac{1}{\theta'}-\widetilde{a}f-\widetilde{b}=\widetilde{C}\cos\theta,
    \quad\text{i.e.}\quad 
    \theta'=\frac{1}{\widetilde{C}+\widetilde{a}f+\widetilde{b}}.\label{thetalinha}
\end{eqnarray}
Since $\displaystyle{f'=\frac{dg}{d\theta}\,\theta'}$, then
\[
\frac{df}{d\theta}=\frac{f'}{\theta'}=\frac{\tan\theta}{\theta'}
=\tan\theta\big(\widetilde{C}+\widetilde{a}f+\widetilde{b}\big).
\]

Therefore, we obtain the linear ordinary differential equation
\begin{eqnarray}
    \frac{df}{d\theta}-\widetilde{a}\tan\theta\,f
    =\widetilde{b}\tan\theta+\widetilde{C}.\label{eqlinear}
\end{eqnarray}

\begin{enumerate}
    \item[i)] If $\widetilde{a}\neq -1$ and $\widetilde{a}\neq 0$, the solution of (\ref{eqlinear}) is given by
    \[
    f=\frac{-\widetilde{b}}{\widetilde{a}}
    -\frac{\widetilde{C}}{\widetilde{a}+1}\cos\theta
    +\widetilde{C}_1\sec^{\widetilde{a}}\theta.
    \]
    From (\ref{thetalinha}), we have
    \[
    \frac{du}{d\theta}
    =\frac{\widetilde{C}}{\widetilde{a}+1}\cos\theta
    +\widetilde{a}\widetilde{C}_1\sec^{\widetilde{a}}\theta,
    \quad\text{i.e.}\quad 
    u=\widetilde{C}_2+\frac{\widetilde{C}}{\widetilde{a}+1}\sin\theta
    +\widetilde{a}\widetilde{C}_1\int\sec^{\widetilde{a}}\theta\,d\theta.
    \]
    Moreover, by (\ref{thetalinha}), $\displaystyle{J=\bigg\{\theta\in\bigg(\tfrac{-\pi}{2},\,\tfrac{\pi}{2}\bigg)\,\bigg|\,
    \widetilde{a}\widetilde{C}_1\sec^{\widetilde{a}+1}\theta
    +\frac{\widetilde{C}}{\widetilde{a}+1}\neq0 \bigg\}}$.

    \item[ii)] If $\widetilde{a}=0$, the solution of (\ref{eqlinear}) is given by
    \[
    f=\widetilde{C}_1-\widetilde{C}\cos\theta-\widetilde{b}\ln\cos\theta.
    \]
    From (\ref{thetalinha}), we obtain
    \[
    u=\widetilde{C}_2+\widetilde{b}+\widetilde{C}\sin\theta,
    \quad\text{and}\quad 
    J=\bigg\{\theta\in\bigg(\tfrac{-\pi}{2},\,\tfrac{\pi}{2}\bigg)\,\bigg|\,
    \widetilde{b}\sec\theta+\widetilde{C}\neq 0 \bigg\}.
    \]

    \item[iii)] If $\widetilde{a}=-1$, then the solution of (\ref{eqlinear}) is given by
    \[
    f(u)=\widetilde{b}+\widetilde{C}_1\cos\theta-\widetilde{C}\cos\theta\,\ln(\cos\theta).
    \]
    Therefore, from (\ref{thetalinha}) we get
    \[
    u=\widetilde{C}_2-\widetilde{C}_1\sin\theta
    +\widetilde{C}\!\left[\sin\theta\,\ln(\cos\theta)+\ln\!\big(\sec\theta+\tan\theta\big)\right],
    \]
    and
    \[
    J=\bigg\{\theta\in\bigg(\tfrac{-\pi}{2},\,\tfrac{\pi}{2}\bigg)\,\big|\,
    \widetilde{C}(1+\ln\cos\theta)-\widetilde{C}_1\neq0\bigg\}.
    \]
\end{enumerate}

\end{proof}

\begin{remark}\label{edo2}
   By Lemma (\ref{lemaedo}), if $\widetilde{C}=0$, then the solution of the equation
\[
h''(a_1h+b_1)=1+(h')^2,
\]
is given by
\begin{eqnarray}\label{h}
h=\begin{cases}
\frac{-b_1}{a_1}+C_1\,\sec^{\,a_1}\theta,\quad\text{and}\quad u=C_2+a_1\,C_1\!\int\sec^{\,a_1}\theta\,d\theta, &\text{if}\quad a_1\neq0,\\[1.2em]
-b_1\,\ln(\cos\theta)+C_2,\quad\text{and}\quad u=b_1\,\theta+C_1, &\text{if}\quad a_1=0,\,\,\,b_1\neq0,
\end{cases}
\end{eqnarray}
In this case, $\displaystyle{\theta\in J=\bigg(\tfrac{-\pi}{2},\,\tfrac{\pi}{2}\bigg)}$.

\end{remark}

Note that, when $a_1=1$,  we obtain a family of translated catenaries
\[x\,=\,C_1\cosh\bigg(\frac{y-C_2}{C_1}\bigg)-b_1,\quad C_1>0,\,\,\, C_2\,,\,b_1\in\mathbb{R}.\]
In fact, by the previous Remark, writing
\[
x=-C+C_1\,\sec\theta \quad \text{and} \quad y=C_2+\,C_1\ln\big(\sec\theta+\tan\theta\big),
\]
we obtain that
\[
\cosh\!\left(\frac{y-C_2}{C_1}\right)=\sec\theta.
\]
Moreover, when $a_1=2$, we obtain families of parabolas 
\[x\,=\,\frac{\big(y-C_2\big)^2}{4C_1}+\bigg(C_1-\frac{b_1}{2}\bigg)^2,\quad C_1\neq0,\,\,\, C_2\,,\,b_1\in\mathbb{R}.\]

\begin{lemma}\label{edocomplexa}
Let $a,b,c\in\mathbb{R}$ be constants and let $f,g:U\subset\mathbb{C}\to\mathbb{C}$ be holomorphic on a domain $U$ with $g'(z)\neq0$.
Assume that
\begin{equation}\label{eq:complex-ode}
 (f')^{2}=(a f+b+ic)\Big(f''-\frac{g''}{g'}\,f'\Big)\qquad\text{on }U,
\end{equation}
where the primes denote derivatives with respect to $z$.
Then, on each simply connected subdomain where a branch of the logarithm and of the complex power is fixed, there exist constants $z_1,z_2\in\mathbb{C}$ such that
\begin{eqnarray}\label{g(z)f(z)}
   f(z)=\begin{cases}
    \frac{\big(z_1\,g(z)+z_2\big)^{\frac{a}{a-1}}-b-ic}{a}, &\text{if}\quad a \in \mathbb{R}\setminus\{1,0\}\\[1.2em]
    z_1\,e^{z_2\,g(z)}\,-\,b-ic, &\text{if}\quad a=1 ,\\[1.2em]
-\,(b+ic)\,\ln\!\big(z_1\,g(z)+z_2\big), &\text{if}\quad a=0.
\end{cases}
\end{eqnarray} 
\end{lemma}

\begin{proof}
   Consider $f,g:U\subset\mathbb{C}\to\mathbb{C}$ holomorphic on a domain $U$ with $g'(z)\neq0$ and the equation
    \[
\ (f')^2 = (af+b+ic)\Bigg(f''-\frac{g''}{g'}f'\Bigg)\ .
\]

We write $f=f(g)$, so that
\[
f' = \frac{df}{dg}\,g', 
\quad
f'' = \frac{d^{2}f}{dg^{2}}g'^{2} + \frac{df}{dg}\,g''.
\]
Substituting into the previous equation, we have
\[
\left(\frac{df}{dg}\right)^{2} = (a f + b + i c)\frac{d^{2}f}{dg^{2}}.
\]
Let $q=\dfrac{df}{dg}$. Then the last equation can be rewritten as
\begin{eqnarray}
    q^{2}dg = (a f + b + i c)\frac{dq}{dg}dg
\quad\text{i.e.}\quad
q\,df = (a f + b + i c)dq.\label{eqq2}
\end{eqnarray}

\begin{enumerate}
    \item [(i)] If $a\neq 0$ and $a\neq 1$, then from (\ref{eqq2}), we obtain
\[
\frac{df}{dg} = \widetilde{z}_1(a f + b+ic)^{1/a},\quad \widetilde{z}_1\in\mathbb{C}.
\]
Thus, we get the first equation of the (\ref{g(z)f(z)}).

\item[(ii)] If $a= 1$, then from (\ref{eqq2}), we obtain
\[\frac{df}{dg}=\widetilde{z}_1(f+b+ic).\]
Then, by integrating this last equation, we obtain the second equation in (\ref{g(z)f(z)}).

\item[(iii)] If $a= 0$, then using (\ref{eqq2}) and proceeding as in the previous cases, we conclude that $f$ is given by the third equation in (\ref{g(z)f(z)}), which completes the proof.

\end{enumerate}

\end{proof}

\section{CPD, Translational, and Harmonic Surfaces with radial mean curvature prescribed in terms of the height and angle functions}\label{sec:superficie}

In this section, we deal with surfaces with canonical principal direction
(CPD surfaces), translational surfaces, and Laguerre-type surfaces, 
which are obtained from the prescription of the radial mean curvature given in terms
of the height and angle functions. These surfaces will be useful in the next section
for providing examples of rotated translational hypersurfaces. We begin by presenting
the definition of CPD surfaces.

\begin{definition}
Let $d \in \mathbb{R}^3$ be a unitary vector. We say that a surface
$M^2$ in $\mathbb{R}^3$ has a canonical principal direction with respect to $d$ 
if the tangent part $d^T$ along $M$ is a principal direction. 
\end{definition}
Surfaces with a canonical principal direction have suitable parameterization. 
It was proved in \cite{Garnica2012} that a CPD surface with respect to $d$ admits the following parameterization
\[
X(u,v) = \gamma(u) + f(v)n(u) + g(v)d,
\]
where $\gamma = \gamma(u)$, $s \in I \subset \mathbb{R}$ is a curve contained in a plane orthogonal to $d$, 
$n = n(u)$ is a unit vector field along $\gamma$ and orthogonal to both $\gamma$ and $d$. The curve $\gamma$ is named the directrix of $M$, and the curve $\beta(v)=(f(v)\,,\,g(v))$ is the profile
curve of $M$.\\

Locally, given a CPD surface, we may consider it with $d = e_3$ and directrix
and profile curves of the graph type. Hence, we consider the profile curve
$(h(v)\,,\, v)$ and the directrix curve $\gamma(u)=(f(u)\,,\,u)$. Under these conditions, the CPD surface is locally parameterized by

\begin{eqnarray}
    X(u,v) = (f(u)\,,\,u\,,\,0) +  h(v)n(u) +ve_3. \label{CPD}
\end{eqnarray}

The unit normal vector of this CPD surface is given by
\[N=\frac{1}{\sqrt{1+h'^2}}\big(h'e_3-n\big),\]
 where $n(u)$ and the curvature of $\gamma$ are given by
 \begin{eqnarray}
     n=\frac{\big(-1\,,\,f'\,,\,0\big)}{\sqrt{1+f'^2}},\quad \kappa_{\gamma}=\frac{-f''}{(1+f'^2)^{3/2}},\label{kgamma}
 \end{eqnarray}
and the principal curvatures of the $X$, are given by
\begin{eqnarray}
    \kappa_1=\frac{k_{\gamma}}{(1-k_{\gamma}h)\sqrt{1+h'^2}},\quad\text{and}\quad\kappa_2=\frac{h''}{(1+h'^2)^{3/2}}.\label{principais}
\end{eqnarray}
Therefore, the height and angle functions of $X$ are given by
\begin{eqnarray}
    X_1=f(u)-\frac{h(v)}{\sqrt{1+f'(u)^2}},\quad N_1=\frac{1}{\sqrt{1+f'(u)^2}\sqrt{1+h'(v)^2}}.\label{cpdfun}
\end{eqnarray}

Next, we provide the theorem that gives a local classification of CPD surfaces
with the prescription of the radial mean curvature in terms of the height and
angle functions.

\begin{theorem}\label{TeoremaCPD}
   Let $M^2$ be a CPD surface locally parameterized by (\ref{CPD}), where the Gaussian curvature is nonzero. Then, the radial mean curvature of $M^2$ is given by
\[
N_1A_M=aX_1+b,\quad\text{where}\quad a,b\in\mathbb{R},
\]
if and only if there exists a constant $C$ and local coordinates $(\theta,\phi)\in U$, such that the directrix and profile curves of $M^2$ are given by
\begin{equation}\label{gammacpd}
\gamma(\theta)=
\begin{cases}
\left(
\begin{alignedat}{2}
& C_1 \sec^{a}\theta \;-\; \tfrac{C}{a+1}\cos\theta \;-\; \tfrac{b}{a}, \\[0.3em]
& C_2 \;+\; \tfrac{C}{a+1}\sin\theta \;+\; a\,C_1 \!\int \sec^{a}\theta\,d\theta
\end{alignedat}
\right),
& a \in \mathbb{R}\setminus\{-1,0\}, \\[1.2em]
\left(
\begin{alignedat}{2}
& C_1 \;-\; C\cos\theta \;-\; b\,\ln(\cos\theta), \\[0.3em]
& C_2 \;+\; b\,\theta \;+\; C\,\sin\theta
\end{alignedat}
\right),
& a=0, \\[1.2em]
\left(
\begin{alignedat}{2}
& b \;+\; C_1\cos\theta \;-\; C\,\cos\theta\,\ln(\cos\theta), \\[0.3em]
& C_2 - C_1\sin\theta + C\!\left[\sin\theta\,\ln(\cos\theta)+\ln\!\big(\sec\theta+\tan\theta\big)\right]
\end{alignedat}
\right),
& a=-1.
\end{cases}
\end{equation}

and
\begin{eqnarray}\label{betacpd}
    \beta(\phi)=\begin{cases}
    \left(C_3\,\sec^{\,1+a}\phi-\frac{C}{a+1}\,,\,C_4+(1+a)\,C_3\!\int\sec^{\,1+a}\phi\,d\phi\right), &\text{if}\quad a\neq-1 ,\\[1.2em]
\bigg(-C\,\ln(\cos\phi)+C_3\,,\,C_4+C\,\phi\bigg), &\text{if}\quad a=-1,
\end{cases}
\end{eqnarray}
where $\displaystyle{U=\bigg\{(\theta,\phi)\in J\times\Big(\tfrac{-\pi}{2},\,\tfrac{\pi}{2}\Big)\,\Big|\,1-\kappa_{\gamma}\beta_1(\phi)\neq0\bigg\}}$, with $J$ given by (\ref{J}), and with constants $C_1\neq0$, $C_3\neq0$, and $C_2,C_4\in\mathbb{R}$.
\end{theorem}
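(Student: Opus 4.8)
The plan is to reduce the prescription $N_1A_M=aX_1+b$ to a pair of decoupled ordinary differential equations—one governing the directrix function $f$ and one governing the profile function $h$—each of which is solved by the auxiliary results already established in Section \ref{sec:edo}.

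First I would assemble $A_M=\tfrac{1}{\kappa_1}+\tfrac{1}{\kappa_2}$ from the principal curvatures (\ref{principais}) and the directrix curvature $\kappa_\gamma$ in (\ref{kgamma}), and then multiply by the angle function $N_1$ from (\ref{cpdfun}). The key structural feature is that $N_1$ carries precisely the factor $\tfrac{1}{\sqrt{1+h'^2}}$ that cancels the $\sqrt{1+h'^2}$ appearing in $1/\kappa_1$ and $1/\kappa_2$, so that in the product $N_1A_M$ the variable $v$ (entering through $h,h',h''$) always appears multiplied by $\tfrac{1}{\sqrt{1+f'^2}}$. Substituting this together with $X_1=f-\tfrac{h}{\sqrt{1+f'^2}}$ into the prescription and collecting terms, a direct computation gives an identity of the form
\[
\frac{1+f'^2}{f''}-af-b \;=\; \frac{1}{\sqrt{1+f'^2}}\Big(\tfrac{1+h'^2}{h''}-(1+a)h\Big).
\]
Multiplying through by $\sqrt{1+f'^2}$ throws all the $u$-dependence onto the left and all the $v$-dependence onto the right, so each side must equal a constant $C\in\mathbb{R}$. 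This separation is the crux of the argument and the step I expect to be the most delicate, since it rests on verifying that every cross term really does organize into this single $\tfrac{1}{\sqrt{1+f'^2}}\cdot(\text{function of }v)$ shape.

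Once separated, the left-hand equation $\sqrt{1+f'^2}\big(\tfrac{1+f'^2}{f''}-af-b\big)=C$ is exactly equation (\ref{eqhpreliminar}) of Lemma \ref{lemaedo} with $(\widetilde a,\widetilde b,\widetilde C)=(a,b,C)$; applying the lemma under the substitution $f'=\tan\theta$ produces the three cases of (\ref{gammacpd}) for the directrix $\gamma=(f,u)$ and identifies the admissible range $J$ of (\ref{J}). The right-hand equation rearranges to $h''\big((1+a)h+C\big)=1+h'^2$, which is precisely the equation of Remark \ref{edo2} with $(a_1,b_1)=(1+a,C)$; this yields (\ref{betacpd}) for the profile $\beta=(h,v)$, with the case split $a\neq-1$ versus $a=-1$ corresponding exactly to $a_1\neq0$ versus $a_1=0$ there.

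Finally I would reverse the implications: starting from a directrix and profile of the stated forms (\ref{gammacpd}) and (\ref{betacpd}), they satisfy the two separated ODEs with a common constant $C$, which recombine through the same algebra to return $N_1A_M=aX_1+b$. Throughout, the hypothesis that the Gaussian curvature is nonzero—equivalently $\kappa_1\kappa_2\neq0$, i.e.\ $f''\neq0$ and $h''\neq0$—together with the nondegeneracy $1-\kappa_\gamma\beta_1\neq0$ is what guarantees that every expression in the computation is defined and pins down the coordinate domain $U$ appearing in the statement.
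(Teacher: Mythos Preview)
Your proposal is correct and follows essentially the same route as the paper: compute $A_M$ from (\ref{principais}) and (\ref{kgamma}), multiply by $N_1$ from (\ref{cpdfun}), subtract $aX_1+b$, and observe that the resulting identity separates variables after clearing the factor $\sqrt{1+f'^{2}}$, producing exactly the two ODEs $\sqrt{1+f'^{2}}\big(\tfrac{1+f'^{2}}{f''}-af-b\big)=C$ and $\tfrac{1+h'^{2}}{h''}=(1+a)h+C$, which are then dispatched by Lemma \ref{lemaedo} and Remark \ref{edo2} respectively. The paper writes the separated equation directly (its display (\ref{eqCPD1})) rather than your intermediate form with the $\tfrac{1}{\sqrt{1+f'^{2}}}$ factor still present, but that is a purely cosmetic difference.
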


\begin{proof}
    Let $M^2$ be a CPD surface locally parameterized by (\ref{CPD}). Thus, by (\ref{principais}) and (\ref{kgamma}), we get
    \[A_M=\frac{1}{\kappa_1}+\frac{1}{\kappa_2}=-\sqrt{1+h'^2}\left(\frac{-(1+f'^2)^{3/2}}{f''}-h+\frac{1+h'^2}{h''}\right).\]
    In this way,  by (\ref{cpdfun}), we have that, $N_1 A_R^2 = a X_1 + b$,
if and only if
\begin{eqnarray}
    (1+a)\,h
+ \sqrt{1+f'^{2}}\left(\frac{1+f'^{2}}{f''} - a f - b\right)
- \frac{1+h'^{2}}{h''}
= 0.\label{eqCPD1}
\end{eqnarray}
Hence, there exists a constant $C$ such that (\ref{eqCPD1}) is equivalent to
\begin{eqnarray}
    &&\sqrt{1+f'^{2}}\left(\frac{1+f'^{2}}{f''} - a f - b\right)=C,\label{eqcpdedo}\\
    &&
\frac{1+h'^{2}}{h''}=(1+a)h\,
+ C.\label{eqcpd2}
\end{eqnarray}
Therefore, by Lemma \ref{lemaedo} and Remark \ref{edo2}, we conclude the proof.

\end{proof}

Munteanu and Nistor proved in \cite{MunteanuNistor2011} that the only minimal surface in $\mathbb{R}^3$ with a canonical principal direction besides the plane is the catenoid. 

Note that the catenoid arises as a particular case of Theorem \ref{TeoremaCPD} by choosing $a=b=0$.

 In fact, if $a=b=0$, then by Theorem \ref{TeoremaCPD} the directrix curve is a circle, thus the CPD surface $M^2$ is a surface of revolution. Moreover, the profile curve is the catenary, since
\[\beta(\phi)=
    \left(C_3\,\sec\phi-C\,,\,C_4+C_3\ln(\sec\phi+\tan\phi)\right).\]
Thus
\begin{eqnarray}
    y=C_4+C_3\ln\bigg[\frac{x+C}{C_3}+\sqrt{\bigg(\frac{x+C}{C_3}\bigg)^2-1}\bigg]=C_4+C_3\operatorname{arcosh}\bigg(\frac{x+C}{C_3}\bigg).\nonumber
\end{eqnarray}
That is, $x = y(x)$ is the catenary, and $M^2$ is the catenoid.

\vspace{0.3cm}

Next, we highlight translational surfaces and then provide a local classification 
for translational surfaces with the prescription of the radial mean curvature in 
terms of the height and angle functions.\\

Let $M^2$ be a translational surface. Then, locally, $M^2$ can be parametrized by
\begin{eqnarray}
    X(u,v)=(f_1(u)+f_2(v)\,,\,u\,,\,v).\label{translcional}
\end{eqnarray}
The unit normal field and the Gaussian and mean curvatures of $M^2$ are given by
\begin{eqnarray}
    N=\frac{(1\,,\,-f_1'\,,\,-f_2')}{\sqrt{1+f_1'^2+f_2'^2}},\quad K=\frac{f_1''f_2''}{(1+f_1'^2+f_2'^2)^2},\quad H
=\frac{(1+f_2'^2)f_1''+(1+f_1'^2)f_2''}{2(1+f_1'^2+f_2'^2)^{3/2}}.\label{NkHtran}
\end{eqnarray}

For this surfaces, the height and angle functions of $X$ are given by
\begin{eqnarray}
    X_1=f_1(u)+f_2(v),\quad N_1=\frac{1}{\sqrt{1+f_1'(u)^2+f_2'(v)^2}}.\label{tranfun}
\end{eqnarray}

\begin{theorem}\label{Teorematran}
    Let $M^2$ be a translational surface locally parameterized by (\ref{translcional}), where the Gaussian curvature is nonzero. Then, the radial mean
curvature of $M^2$ is given by
\[N_1A_M=aX_1+b,\quad\text{where}\quad a,b\in\mathbb{R},\]
if and only if there exists local coordinates $\displaystyle{(\theta,\phi)\in\bigg(\frac{-\pi}{2}\,,\,\frac{\pi}{2}\bigg)\times\bigg(\frac{-\pi}{2}\,,\,\frac{\pi}{2}\bigg) }$, 
such that $M^2$ is given by
\begin{eqnarray}\label{Xtranslacional}
    X=\begin{cases}
    \bigg(\frac{-b}{a}+C_3\sec^{a}\theta
+C_5\sec^{a}\phi\,,\,C_4+aC_3\Phi(\theta)\,,\,
C_6+aC_5\Phi(\phi)\bigg), &\text{if}\quad a\neq 0 ,\\[1.2em]
\bigg(C_3-C_1\ln\big(\cos\theta\big)-(b-C_1)\ln\big(\cos\phi\big)\,,\,C_4+C_1\theta\,,\, \,C_5+(b-C_1)\phi\bigg), &\text{if}\quad a=0,
\end{cases}
\end{eqnarray}
where $\displaystyle{\Phi(t)=\int\sec^at\,dt}$.
\end{theorem}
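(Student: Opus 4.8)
The plan is to reduce the prescription $N_1A_M=aX_1+b$ to a pair of separated ordinary differential equations, one in $f_1$ and one in $f_2$, and then invoke Remark \ref{edo2}, exactly as in the proof of Theorem \ref{TeoremaCPD}. First I would rewrite $A_M$ through $H$ and $K$. Since
\[
A_M=\frac{1}{\kappa_1}+\frac{1}{\kappa_2}=\frac{\kappa_1+\kappa_2}{\kappa_1\kappa_2}=\frac{2H}{K},
\]
substituting the curvatures from (\ref{NkHtran}) gives
\[
A_M=\sqrt{1+f_1'^{2}+f_2'^{2}}\,\left(\frac{1+f_1'^{2}}{f_1''}+\frac{1+f_2'^{2}}{f_2''}\right),
\]
and multiplying by $N_1$ from (\ref{tranfun}) the radical cancels, so that
\[
N_1A_M=\frac{1+f_1'^{2}}{f_1''}+\frac{1+f_2'^{2}}{f_2''}.
\]

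Using $X_1=f_1(u)+f_2(v)$, the prescription $N_1A_M=aX_1+b$ is therefore equivalent to
\[
\left(\frac{1+f_1'^{2}}{f_1''}-a f_1\right)+\left(\frac{1+f_2'^{2}}{f_2''}-a f_2\right)=b.
\]
The first parenthesis depends only on $u$ and the second only on $v$, and their sum is the constant $b$; hence separation of variables produces a constant $b_1\in\mathbb{R}$ for which
\[
f_1''\,(a f_1+b_1)=1+f_1'^{2},\qquad f_2''\,\big(a f_2+(b-b_1)\big)=1+f_2'^{2}.
\]
Each of these is precisely the equation treated in Remark \ref{edo2} (the case $\widetilde{C}=0$ of Lemma \ref{lemaedo}) with $a_1=a$ and constant term $b_1$, respectively $b-b_1$. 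Applying that remark in the two regimes $a\neq0$ and $a=0$, and introducing the reparametrizations $\theta,\phi$ it supplies, yields the closed forms for $f_1,f_2$ together with the expressions for $u,v$.

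Finally I would assemble the first coordinate $X_1=f_1+f_2$. When $a\neq0$ the amplitudes become $C_3,C_5$, the $u$- and $v$-reparametrizations absorb $C_4,C_6$, and the constant terms $-b_1/a$ and $-(b-b_1)/a$ add to $-b/a$, reproducing the first line of (\ref{Xtranslacional}); when $a=0$ the identification $b_1=C_1$, $b-b_1=b-C_1$ gives the logarithmic form in the second line. The converse is immediate, since any $X$ of the stated shape satisfies the two separated ODEs and hence the prescription, closing the equivalence. The only delicate points are the separation-of-variables step, namely justifying that a function of $u$ plus a function of $v$ can equal a constant only if each summand is itself constant, and the bookkeeping of integration constants so that $f_1+f_2$ collapses to the single constant $-b/a$ (respectively to the combined logarithm); the standing hypothesis $K\neq0$, equivalently $f_1''\neq0$ and $f_2''\neq0$, is what permits dividing by $f_i''$ throughout. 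I expect no serious obstacle beyond this bookkeeping, as the computation mirrors the CPD case in Theorem \ref{TeoremaCPD}.
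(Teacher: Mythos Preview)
Your proposal is correct and follows essentially the same route as the paper: compute $N_1A_M=\dfrac{1+f_1'^{2}}{f_1''}+\dfrac{1+f_2'^{2}}{f_2''}$, separate variables to obtain the pair $1+f_i'^{2}=(af_i+C_i)f_i''$ with $C_1+C_2=b$, and then invoke the $\widetilde{C}=0$ case of Lemma~\ref{lemaedo} (i.e.\ Remark~\ref{edo2}). Your write-up is in fact a bit more careful than the paper's, in that you make explicit the role of $K\neq0$ in allowing division by $f_i''$ and you spell out the bookkeeping that collapses the additive constants to $-b/a$.
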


\begin{proof}
    Let $M^2$ be a translational surface locally parameterized by (\ref{translcional}). Thus, by (\ref{NkHtran}), we get
    \[A_R^2=\frac{1}{\kappa_1}+\frac{1}{\kappa_2}=\frac{2H}{K}=\sqrt{1+f_1'^2+f_2'^2}\left(\frac{1+f_2'^2}{f_2''}+\frac{1+f_1'^2}{f_1''}\right).\]
    Thus,  by (\ref{tranfun}), we have $N_1 A_R^2 = a X_1 + b$,
if and only if
\begin{eqnarray}
   \frac{1+f_2'^2}{f_2''}+\frac{1+f_1'^2}{f_1''}
= a(f_1+f_2)+b.\label{eqtran1}
\end{eqnarray}
Hence, there exists constants $C_1$ and $C_2$, with $C_1+C_2=b$  such that (\ref{eqtran1}) is equivalent to
\begin{eqnarray}\nonumber
   1+f_1'^2=(af_1+C_1)f_1'',\qquad
1+f_2'^2=(af_2+C_2)f_2''.
\end{eqnarray}
Therefore, by Lemma \ref{lemaedo}, we conclude that the translational surface $M^2$ can be parameterized by (\ref{Xtranslacional}).
\end{proof}

\begin{remark}
    Based on the choices of the constants $a$ and $b$, from Theorem \ref{Teorematran} we have:
\begin{enumerate}
    \item[a)] If $a = b = 0$, then the translational surface is minimal. In this case, according to Theorem \ref{Teorematran}, $M^2$ is the Scherk surface. In fact, by Theorem \ref{Teorematran}, if $a = b = 0$, then the translational surface is parameterized by
\[X(\theta,\phi)=\bigg(C_3-C_1\ln\big(\cos\theta\big)+C_1\ln\big(\cos\phi\big)\,,\,C_4+C_1\theta\,,\, \,C_5-C_1\phi\bigg), \]
\[X(\theta,\phi)=\bigg(C_3-C_1\ln\frac{\cos\theta}{\cos\phi}\,,\,C_4+C_1\theta\,,\, \,C_5-C_1\phi\bigg). \]
\item [b)] If $a=2$, then by Theorem \ref{Teorematran}, the translational surface is an elliptic paraboloid if 
$C_3 C_5 > 0$, and a hyperbolic paraboloid if $C_3 C_5 < 0$.\\
In fact, if $a = 2$, then by Theorem \ref{Teorematran}, we have
\[
X(\theta,\phi) \;=\;
\left(
\frac{-b}{2} \;+\; \frac{C_3}{2}\sec^{2}\theta \;+\; \frac{C_5}{2}\sec^{2}\phi,\;
C_4 + C_3\tan\theta,\;
C_6 + C_5\tan\phi
\right).
\]
Thus
\[
X(\theta,\phi) \;=\;\left( \frac{-b}{2}\,,\,C_4\,,\,C_6\right)+
\left(
\frac{C_3}{2}\sec^{2}\theta \;+\; \frac{C_5}{2}\sec^{2}\phi,\;
 C_3\tan\theta,\;
 C_5\tan\phi
\right).
\]
Clearly, an elliptic paraboloid if $C_3 C_5 > 0$, and a hyperbolic paraboloid if $C_3 C_5 < 0$.
\end{enumerate}
\end{remark}

\vspace{0.3cm}

We conclude this section by providing a local classification for harmonic surfaces of graph type, with the prescription of the radial mean curvature in terms of the height and angle functions.

\begin{definition}
Let \(X(u,v)=(X_1(u,v),\,X_2(u,v),\,X_3(u,v))\) be a parametrization of a surface \(M^2\subset\mathbb{R}^3\).
We say that \(M^2\) is a \emph{harmonic surface} if \(\Delta X_i=0\) for \(i=1,2,3\).
In other words, each coordinate function of the immersion is harmonic.
\end{definition}

Let $z=u+iv$. Given a holomorphic function $f:\mathbb{C}\to\mathbb{C}$, we adopt the following notation
\[
\langle 1,\,f(z)\rangle=\Re(f(z)),\qquad \langle i,\,f(z)\rangle=\Im(f(z)).
\]

\begin{definition}
Let \(f,g:\mathbb{C}\to\mathbb{C}\) be holomorphic functions.  
The surface \(M\subset\mathbb{R}^3\) parametrized by
\[
X(z)=\big(\langle 1,\,f(z)\rangle\,,\,g(z)\big)
\]
is called a \emph{harmonic surface of graph type}.
\end{definition}
Riveros, Corro, and Barbosa proved in \cite{RiverosCorroBarbosa2016} that
the Gauss map of a graph-type harmonic surface is given by
\begin{eqnarray}
  N = \frac{1}{D}(|g'|^2\,,\,-g'\, \overline{f'}),\quad\text{where}\quad D = |g'|\sqrt{|g'|^2 + |f'|^2}. \label{normalhormonica} 
\end{eqnarray}
Moreover, in \cite{RiverosCorroBarbosa2016}, Riveros, Corro, and Barbosa proved that the Gaussian curvature and the mean curvature of a harmonic surface of graph type are given by
\begin{eqnarray}
    K = -\frac{|g'|^4 |A|^2}{D^4},\quad H = -\frac{|g'|^2 \,\langle A,f'^2\rangle}{2D^3}, \quad\text{where}\quad A = f'' - \frac{g''}{g'}f'.\label{KeHarmonico}
\end{eqnarray}

Next, we provide a local classification of all harmonic surfaces of graph type with prescribed radial mean curvature in terms of the height and angle functions.

\vspace{0.2cm}

\begin{theorem}\label{Teoremaharmonico}
  Let \(f,g:\mathbb{C}\to\mathbb{C}\) be holomorphic functions, such that, $g'(z)\neq0$. 
    Consider $M^2$ a graph-type harmonic surface
where the Gaussian curvature is nonzero. Then, the radial mean
curvature of $M^2$ is given by
\[N_1A_M=aX_1+b,\quad\text{where}\quad a,b\in\mathbb{R},\]
if and only if there exists constants $z_1\neq0,\,z_2\in\mathbb{C}$, 
such that $M^2$ is given by
\[X(z) = (\langle 1, f(z) \rangle\,,\,g(z)),\]
where
\begin{eqnarray}\label{g(z)f(z)}
   f(z)=\begin{cases}
    \frac{\big(z_1\,g(z)+z_2\big)^{\frac{a}{a-1}}-b-ic}{a}, &\text{if}\quad  a \in \mathbb{R}\setminus\{1,0\}\\[1.2em]
    z_1\,e^{z_2\,g(z)}\,-\,b-ic, &\text{if}\quad a=1 ,\\[1.2em]
-\,(b+ic)\,\ln\!\big(z_1\,g(z)+z_2\big), &\text{if}\quad a=0,
\end{cases}
\end{eqnarray} 
with $z_2\neq0$ if $a=1$.
\end{theorem}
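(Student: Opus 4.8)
The plan is to show that, after substituting the known curvature formulas, the prescription $N_1A_M=aX_1+b$ collapses to the complex ODE (\ref{eq:complex-ode}), whose solutions are classified in Lemma \ref{edocomplexa}.

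First I would write the radial mean curvature intrinsically. Since $K\neq0$ on $M^2$, we have $A_M=2H/K$, and inserting (\ref{KeHarmonico}) and cancelling the common powers of $|g'|$ and $D$ gives
\[
A_M=\frac{2H}{K}=\frac{\langle A,f'^2\rangle\,D}{|g'|^2\,|A|^2},\qquad A=f''-\frac{g''}{g'}f'.
\]
By (\ref{normalhormonica}) the angle function is the first coordinate of $N$, namely $N_1=|g'|^2/D$, and the height function is $X_1=\langle 1,f\rangle=\Re f$. Multiplying, the factors $|g'|^2$ and $D$ cancel, leaving the simple identity
\[
N_1A_M=\frac{\langle A,f'^2\rangle}{|A|^2}.
\]

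Next I would pass to holomorphic form. Using $\langle A,f'^2\rangle=\Re(\overline{A}\,f'^2)$ together with $\overline{A}/|A|^2=1/A$ (valid because $K\neq0$ forces $A\neq0$), the identity becomes $N_1A_M=\Re(f'^2/A)$, so the prescription $N_1A_M=\Re(af+b)$ is equivalent to
\[
\Re\!\left(\frac{f'^2}{A}-af-b\right)=0.
\]
The decisive step is a rigidity observation: the bracketed function is holomorphic on the domain (as $f,g$ are holomorphic, $g'\neq0$ and $A\neq0$), and its real part vanishes identically, so by the Cauchy--Riemann equations it is a purely imaginary constant $ic$, $c\in\mathbb{R}$. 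This is exactly where the parameter $c$ of the statement enters. Consequently the prescription is equivalent to
\[
f'^2=(af+b+ic)\left(f''-\frac{g''}{g'}f'\right),
\]
which is (\ref{eq:complex-ode}). Applying Lemma \ref{edocomplexa} produces the three explicit forms (\ref{g(z)f(z)}), with $z_1\neq0$ (and $z_2\neq0$ when $a=1$) imposed so that $f$ is nonconstant, i.e. $K\neq0$. For the converse, each listed $f$ solves the complex ODE by reversing the computation in the proof of Lemma \ref{edocomplexa}, and then reading the above equivalences backwards recovers $N_1A_M=aX_1+b$.

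I expect the main obstacle to be conceptual rather than computational: one must recognize that, once $|g'|^2$ and $D$ cancel, the geometric prescription reduces to the single statement that a holomorphic function has vanishing real part, so that its forced constancy both supplies the hidden imaginary constant $c$ and converts the problem into the purely complex-analytic equation already solved in Lemma \ref{edocomplexa}. The only points requiring genuine care are the justification that $A\neq0$ throughout (from $K\neq0$) and the branch choices for the complex power and logarithm, exactly as flagged in that lemma.
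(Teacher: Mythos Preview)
Your proposal is correct and follows essentially the same route as the paper's proof: reduce $N_1A_M=aX_1+b$ to $\langle 1,(f')^2/A\rangle=\langle 1,af+b\rangle$, infer an imaginary constant $ic$ so that $(f')^2=(af+b+ic)A$, and invoke Lemma~\ref{edocomplexa}. You supply more detail than the paper does---in particular, you explicitly justify why the holomorphic function $(f')^2/A-af-b$ must be a purely imaginary constant and why $A\neq0$---but the underlying argument is the same.
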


\begin{proof}
    Consider the harmonic surface of graph type
\[
X(z) = \big(\langle 1, f(z)\rangle,\, g(z)\big),
\]
with $f$ and $g$ holomorphic functions. Thus, from (\ref{normalhormonica}) and (\ref{KeHarmonico}), we have that the radial mean
curvature of $M^2$ is given by
\[N_1A_M=aX_1+b,\quad\text{where}\quad a,b\in\mathbb{R},\]
if and only if
\[\langle 1,\frac{(f')^2}{A}\rangle=\langle 1,af+b\rangle, \quad\text{where}\quad A = f'' - \frac{g''}{g'}f'.\]
Hence, there exists a real constant $c$ such that
\[
\frac{(f')^2}{A} = af+b+ic,\quad\text{i.e.}\quad f'^2=(af+b+ic)\bigg(f'' - \frac{g''}{g'}f'\bigg).
\]
Therefore, by Lemma \ref{edocomplexa}, we conclude the proof.

\end{proof}

\begin{remark}
    If $a=b=0$, then $\displaystyle{A_M=\frac{1}{\kappa_1}+\frac{1}{\kappa_2}=\frac{2H}{K}}=0.$ That is, $M^2$ is a minimal surface. In this case, as proved in \cite{RiverosCorroBarbosa2016}, $M^{2}$ is the helicoid.
\end{remark}

\section{Technical results on parallel hypersurfaces}\label{sec:paralela}

In this section, we present important technical results on parallel hypersurfaces, which will be used in the subsequent section.  
Given a hypersurface $M^m$ parameterized by $Y:U\subseteq\mathbb{R}^{m}\to M^m$ with Gauss map $N$ and principal curvatures $\kappa_i$, $1\leq i\leq m$, the parallel hypersurface $\widetilde{M}^m$ to $M^m$ is parameterized by $\widetilde{Y}=Y+tN$ on an open subset of $\mathbb{R}^m$ where $1-t\kappa_i\neq0$ for all $1\leq i\leq m$.  

Next, we provide two lemmas.

\begin{lemma}\label{lema1}
    Let $M^m$ be a non–totally umbilical hypersurface such that $\dfrac{H_{m-1}}{H_m}=C$, with $C \in \mathbb{R}$ a constant. 
Then, there exists a parallel hypersurface $\widetilde{M}_C^m$ to $M^m$ such that $\widetilde{H}_{m-1}=0$.
\end{lemma}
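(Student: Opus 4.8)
The plan is to track how the principal radial curvatures transform under passing to a parallel hypersurface, and to choose the offset $t$ that forces their sum to vanish. Recall that the principal curvatures of the parallel hypersurface $\widetilde{Y}=Y+tN$ are $\widetilde{\kappa}_i=\dfrac{\kappa_i}{1-t\kappa_i}$ on the open set where $1-t\kappa_i\neq0$. The first observation is that this induces an \emph{affine} relation on the principal radial curvatures, namely
\[
\frac{1}{\widetilde{\kappa}_i}=\frac{1-t\kappa_i}{\kappa_i}=\frac{1}{\kappa_i}-t .
\]
Summing over $i=1,\dots,m$ and using the notation $A_M=\sum_{i=1}^m \tfrac{1}{\kappa_i}$ from Section \ref{sec:preliminares}, I would obtain $A_{\widetilde{M}}=A_M-mt$.

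Next I would invoke the hypothesis together with the identity $A_M=m\,H_{m-1}/H_m$ established in the preliminaries. Since $H_{m-1}/H_m=C$, this gives $A_M=mC$, and therefore choosing the specific distance $t=C$ yields $A_{\widetilde{M}}=mC-mC=0$. Because $A_{\widetilde{M}}=m\,\widetilde{H}_{m-1}/\widetilde{H}_m$, and because on the relevant open set $H_m\neq0$ forces every $\kappa_i\neq0$, hence every $\widetilde{\kappa}_i\neq0$ and $\widetilde{H}_m\neq0$, the vanishing of $A_{\widetilde{M}}$ is equivalent to $\widetilde{H}_{m-1}=0$. Thus $\widetilde{M}_C^m:=\{Y+C\,N\}$ is the desired parallel hypersurface, and the role of the constant $C$ is exactly that it equals the (constant) radial mean curvature $H_R=A_M/m$.

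The step requiring genuine care is showing that this parallel hypersurface actually exists, i.e. that the map $Y+CN$ is an immersion on a nonempty open set, which amounts to $1-C\kappa_i\neq0$ for all $i$. This is precisely where the non--totally--umbilical hypothesis enters: if $M$ were totally umbilical with $\kappa_i\equiv\kappa$, then $C=1/\kappa$ and $1-C\kappa_i\equiv0$, so the parallel map would collapse onto the center and no parallel hypersurface would exist. To handle this, I would first treat the geometrically relevant case $m=2$, where a direct computation gives $\prod_{i}(1-C\kappa_i)=-\dfrac{(\kappa_1-\kappa_2)^2}{4\kappa_1\kappa_2}$, which is nonzero exactly at the non--umbilic points; since $M$ is non--totally--umbilical these points form a nonempty open set, and this is the situation used in the applications of the lemma. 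In higher dimensions I would argue analogously, restricting to the open subset on which $1-C\kappa_i\neq0$ for every $i$, the complement being the closed degeneracy locus where the parallel hypersurface fails to be regular; on this open subset the computation above applies verbatim and gives $\widetilde{H}_{m-1}=0$.
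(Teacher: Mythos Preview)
Your proof is correct and follows essentially the same approach as the paper: both compute $\frac{1}{\widetilde{\kappa}_i}=\frac{1}{\kappa_i}-t$, sum to get $A_{\widetilde{M}}=A_M-mt$, and take $t=C$ to annihilate $A_{\widetilde{M}}$. The only difference is that you go further than the paper in discussing the role of the non--totally--umbilical hypothesis and the regularity of $Y+CN$; the paper's proof does not address this at all (it merely notes afterward, in a remark, that parallel hypersurfaces are defined on $U_t=\{u:1-t\kappa_i(u)\neq0\}$), so your treatment is in fact more complete, though your higher-dimensional regularity claim remains a sketch.
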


\begin{proof}
    Let $\kappa_i$ be the principal curvatures of $M^m$ and consider $Y:U\subset \mathbb{R}^{m}\to M^m$ a parameterization of $M^m$. 
Let $\widetilde{M}_t^m$ be a hypersurface parallel to $M^m$. 
Then, $\widetilde{M}_t^m$ is parameterized by
\[
\widetilde{Y}(u)=Y(u)+tN(u).
\]
Then the principal curvatures of $\widetilde{M}_t^m$ are given by
\begin{eqnarray}
    \widetilde\kappa_i = \frac{\kappa_i}{1-t\kappa_i},\quad\text{equivalently}\quad \kappa_i = \frac{\widetilde\kappa_i}{1+t\widetilde\kappa_i}.\label{principalParalela}
\end{eqnarray}
Since $\displaystyle{s\frac{H_{s-1}}{H_s}=\sum_{i=1}^s\frac{1}{\kappa_i}}$, it follows that $\frac{H_{s-1}}{H_s}=C$ if and only if 
\[
sC=\sum_{i=1}^s\frac{1}{\kappa_i}=\sum_{i=1}^s\left(\frac{1}{\widetilde\kappa_i}\right)+st,\quad\text{i.e.}\quad s\frac{\widetilde{H}_{m-1}}{\widetilde{H}_{m}}=\sum_{i=1}^s\frac{1}{\widetilde\kappa_i}=s(C-t).
\]
Therefore, the hypersurface $\widetilde{M}_C^m$ parameterized by $\widetilde{Y}=Y+CN$ is parallel to $M$ such that $\widetilde{H}_{m-1}=0$.

\end{proof}

\begin{lemma}\label{lema2}
    Let $M^m$ be a hypersurface which is not totally geodesic, with a parameterization $Y:U\subset \mathbb{R}^{m}\to M^m$ and Gauss map $N^Y$. Then, 
  \[N_1^Y\big(A_Y+C\big)=aY_1+b,\quad a\,,\,b\,,C \in \mathbb{R},\]  
      where $a\neq -m$, $N_1^Y=\big<N^Y,e_1\big>$ and $Y_1=\big<Y,e_1\big>$, respectively, the angle and height functions, if and only if there exists a hypersurface $\widetilde{M}^m$ parallel to $M^m$, 
parameterized by 
\[
\widetilde{Y}(u)=Y(u)-\frac{C}{a+m}N^Y,
\] 
such that
\[N_1^{\widetilde{Y}}A_{\widetilde{Y}}=a\widetilde{Y}_1+b,\] 
where $\widetilde{Y}_1$ and $N_1^{\widetilde{Y}}$ are, respectively, the height and angle functions.
\end{lemma}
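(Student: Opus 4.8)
The plan is to reduce the claimed equivalence to a direct substitution, exploiting three elementary transformation laws for the parallel hypersurface $\widetilde{Y}=Y+tN^Y$ with $t=-\frac{C}{a+m}$ (which is well defined precisely because $a\neq -m$). First I would recall the standard fact that a hypersurface and a parallel hypersurface share the same Gauss map, so $N^{\widetilde{Y}}=N^{Y}$ and hence the angle functions coincide, $N_1^{\widetilde{Y}}=N_1^{Y}$. Second, I would invoke the principal-curvature relation \eqref{principalParalela} from Lemma \ref{lema1}, namely $\tfrac{1}{\widetilde{\kappa}_i}=\tfrac{1}{\kappa_i}-t$, and sum over $i$ to obtain $A_{\widetilde{Y}}=A_Y-mt$. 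The hypothesis that $M^m$ is not totally geodesic ensures that these quantities are genuinely defined on the open set under consideration. Third, projecting $\widetilde{Y}=Y+tN^Y$ onto $e_1$ gives the height-function relation $\widetilde{Y}_1=Y_1+tN_1^{Y}$.

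With these three identities in hand, I would substitute them into the target equation $N_1^{\widetilde{Y}}A_{\widetilde{Y}}=a\widetilde{Y}_1+b$, obtaining
\[
N_1^{Y}\big(A_Y-mt\big)=a\big(Y_1+tN_1^{Y}\big)+b,
\]
which rearranges to $N_1^{Y}\big(A_Y-(m+a)t\big)=aY_1+b$. Inserting the specific value $t=-\frac{C}{a+m}$ makes the coefficient $-(m+a)t$ collapse to exactly $C$, yielding $N_1^{Y}\big(A_Y+C\big)=aY_1+b$. Since every step in this chain is a reversible algebraic equivalence, this establishes both implications of the lemma simultaneously.

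I do not expect a serious obstacle here: once the parallel-surface transformation laws are recorded, the remaining content is a single algebraic identity. The only points requiring genuine care are bookkeeping the sign convention in \eqref{principalParalela} (so that $A$ transforms as $A_Y-mt$ rather than $A_Y+mt$), and checking that the prescribed offset $t=-\frac{C}{a+m}$ is exactly the value that cancels the cross term $-(m+a)t$ against the constant $C$. Finally, I would note that the assumption $a\neq -m$ is precisely what guarantees both the existence of this offset and the possibility of the cancellation, so it cannot be dropped.
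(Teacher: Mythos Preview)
Your proposal is correct and follows essentially the same approach as the paper: both arguments rest on the three transformation laws for a parallel hypersurface $\widetilde{Y}=Y+tN^Y$ (equality of Gauss maps, the identity $A_Y=A_{\widetilde{Y}}+mt$ obtained by summing the principal-radius relation, and $\widetilde{Y}_1=Y_1+tN_1^Y$), and then a one-line algebraic substitution that forces $t=-\tfrac{C}{a+m}$. The only cosmetic difference is that you fix $t$ at the outset and verify the equivalence, whereas the paper carries a generic $t$ and solves for it at the end.
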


\begin{proof}
    Let $Y:U\subset \mathbb{R}^{m}\to M^m$ and $N^Y$ the Gauss map. 
Consider 
\[
\widetilde{Y}(u)=Y(u)+tN^Y
\]
the parametrization of the hypersurface $\widetilde{M}^m$ parallel to $M^m$. 
Then, if $\kappa_i$ denotes the principal curvatures of $M^m$, then the principal curvatures of $\widetilde{M}^m$ are given by $\displaystyle{\widetilde{\kappa}_i=\frac{\kappa_i}{1-t\kappa_i}}$.
Thus,
\[
A_Y=\sum_{i=1}^m \frac{1}{\kappa_i}=A_{\widetilde{Y}}+mt.
\]
Hence, we obtain
\[
N_1^Y(A_Y+C)=aY_1+b
\]
if and only if
\[
N_1^Y(A_{\widetilde{Y}}+mt+C)=a{\widetilde{Y}}_1-a\,tN_1^Y+b,
\]
equivalently,
\[
N_1^YA_{\widetilde{Y}}=a{\widetilde{Y}}_1+b
\quad\text{with}\quad t=\frac{-C}{m+a}.
\]

\end{proof}

\begin{remark}
The parallel hypersurfaces $\widetilde{Y}=Y+tN^Y$, are defined on the open set
\[
U_t=\{u\in U\,|\,\ 1-t\,\kappa_i(u)\neq0,\,\,\,\,\,\, \, 1\leq i\leq m\}.
\]
\end{remark}

\section{Rotated translational hypersurfaces}\label{sec:RTH}

In this section, we present a class of hypersurfaces generated by two hypersurfaces, which we call a \textbf{rotated translational hypersurfaces}; a directrix hypersurface and a profile hypersurface. Moreover, we provide a method to construct these hypersurfaces with the prescription of the radial mean curvature in terms of the height and angle functions. Next, we present the proofs of Theorems \ref{teorema1} and \ref{teorema2}, and we conclude this section by classifying the rotated translational hypersurfaces of dimension $3$ and by constructing
some examples of such hypersurfaces with dimensions greater than $3$.

We begin with a proposition that provides the principal curvatures of twisted translational hypersurfaces.

\begin{proposition}\label{Prop1}
    Let $M^n$ be a rotated translational hypersurface parametrized by (\ref{eqX}). The principal curvatures of $M^n$ are given by
\begin{eqnarray}
    \kappa_i = \frac{N^Z_1 \,\kappa^Y_i}{1 - Z_1 \kappa^Y_i} 
\quad \text{and} \quad 
\kappa_A = \kappa^Z_A,\label{RTHprincipal}
\end{eqnarray}
where $\kappa^Y_i$, $1\leq i\leq s$ and $\kappa^Z_A$, $s+1\leq A\leq n$ are, respectively, the principal curvatures of the directrix hypersurface and the profile hypersurface, given by (\ref{eqY}) and (\ref{eqZ}).
\end{proposition}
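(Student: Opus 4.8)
The plan is to exploit the fibered structure of the parametrization (\ref{eqX}) by viewing the profile $Z$ as being inserted into $\mathbb{R}^{n+1}$ through a $u$-dependent linear isometry, so that the shape operator of $X$ decouples into a directrix part and a profile part. First I would choose coordinates $u$ to be curvature-line coordinates for the directrix $Y$ and $v$ to be curvature-line coordinates for the profile $Z$, so that the Rodrigues equations $\partial_{u_i}N^Y=-\kappa^Y_i\,Y_{u_i}$ ($1\le i\le s$) and $\partial_{v_\alpha}N^Z=-\kappa^Z_\alpha\,Z_{v_\alpha}$ ($1\le\alpha\le t$) hold. For each fixed $u$ I would introduce the linear map $\Phi_u:\mathbb{R}^{t+1}\to\mathbb{R}^{n+1}$ given by $\Phi_u(w_1,\dots,w_{t+1})=w_1\,N^Y(u)+\sum_{r=2}^{t+1}w_r\,e_{s+r}$. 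Since $N^Y(u)\in P_{s+1}$ is a unit vector orthogonal to $P_{s+1}^{\perp}=\mathrm{span}\{e_{s+2},\dots,e_{n+1}\}$, the map $\Phi_u$ is a linear isometry onto $\mathrm{span}\{N^Y(u)\}\oplus P_{s+1}^{\perp}$, and (\ref{eqX}) becomes simply $X(u,v)=Y(u)+\Phi_u\big(Z(v)\big)$. Differentiating yields $X_{v_\alpha}=\Phi_u(Z_{v_\alpha})$ (as $\Phi_u$ is independent of $v$) and, using the Rodrigues equation for $Y$, $X_{u_i}=Y_{u_i}+Z_1\,\partial_{u_i}N^Y=(1-Z_1\kappa^Y_i)\,Y_{u_i}$.

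The key step is to identify the unit normal as $N^X=\Phi_u(N^Z)=N^Z_1\,N^Y(u)+\sum_{r=2}^{t+1}N^Z_r\,e_{s+r}$. Orthogonality to the $v$-directions is immediate from the isometry property, $\langle\Phi_u(N^Z),\Phi_u(Z_{v_\alpha})\rangle=\langle N^Z,Z_{v_\alpha}\rangle=0$. Orthogonality to the $u$-directions follows because $X_{u_i}$ is a multiple of $Y_{u_i}\in P_{s+1}$, which is orthogonal both to $N^Y$ and to every $e_{s+r}$; hence $\langle\Phi_u(N^Z),Y_{u_i}\rangle=0$. As $\Phi_u$ is an isometry and $|N^Z|=1$, the vector $\Phi_u(N^Z)$ is unit, so up to orientation it is indeed $N^X$.

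Finally I would read off the principal curvatures by differentiating $N^X$. For the $u$-directions, only the summand $N^Z_1\,N^Y(u)$ depends on $u$ (since $N^Z_1$ depends only on $v$), so $\partial_{u_i}N^X=N^Z_1\,\partial_{u_i}N^Y=-N^Z_1\kappa^Y_i\,Y_{u_i}=-\tfrac{N^Z_1\kappa^Y_i}{1-Z_1\kappa^Y_i}\,X_{u_i}$, which gives the first formula in (\ref{RTHprincipal}). For the $v$-directions, linearity of $\Phi_u$ gives $\partial_{v_\alpha}N^X=\Phi_u(\partial_{v_\alpha}N^Z)=-\kappa^Z_\alpha\,\Phi_u(Z_{v_\alpha})=-\kappa^Z_\alpha\,X_{v_\alpha}$, so $\kappa_A=\kappa^Z_A$. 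Since $\{X_{u_i},X_{v_\alpha}\}$ is a basis of eigenvectors of the shape operator (on the open set where $1-Z_1\kappa^Y_i\neq0$, so that it is linearly independent), these eigenvalues are exactly the $n$ principal curvatures of $M^n$, completing the argument.

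I expect the main obstacle to be the careful verification that $\Phi_u(N^Z)$ is normal to the \emph{entire} tangent space, which rests on the orthogonal splitting $\mathbb{R}^{n+1}=P_{s+1}\oplus P_{s+1}^{\perp}$ together with the facts that $N^Y$ lies in $P_{s+1}$ while the frozen frame $\{e_{s+2},\dots,e_{n+1}\}$ spans $P_{s+1}^{\perp}$. The only genuinely delicate bookkeeping is keeping track of which quantities depend on $u$ and which on $v$ when differentiating $N^X$; everything else reduces to the Rodrigues equations of the two factors and the isometry property of $\Phi_u$.
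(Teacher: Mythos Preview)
Your proposal is correct and follows essentially the same route as the paper: choose curvature-line coordinates, identify the unit normal as $N^X=N^Z_1\,N^Y+\sum_{r\ge 2}N^Z_r\,e_{s+r}$, and read off the principal curvatures by differentiating via the Rodrigues equations for $Y$ and for $Z$. Your packaging through the linear isometry $\Phi_u$ is a tidy way to organize the orthogonality checks and the $v$-differentiation, but it does not change the substance of the argument.
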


\begin{proof}
   Consider the parametrization of the rotated translational hypersurface given by
\begin{eqnarray}
   \hspace{-0,5cm} X:U\times V\subseteq\mathbb{R}^{n}\to \mathbb{R}^{n+1},\qquad
    X(u,v)=Y(u)+Z_1(v)N^Y(u)+\sum_{r=2}^{n-s+1}Z_{r}(v)e_{s+r}.\label{Xdef}
\end{eqnarray}
where $Y:U\subseteq\mathbb{R}^{s}\to \mathbb{R}^{s+1}$ and $Z:V\subseteq\mathbb{R}^{\,n-s}\to \mathbb{R}^{\,n-s+1}$ 
are parameterizations of the directrix and profile hypersurfaces, respectively, and 
$N^Y$ denotes the unit normal field of $Y$.

Since the principal curvatures are independent of the parameterization, suppose without loss of generality that
$Y$ is a parameterization by lines of curvature. Hence
\[N^Y_{,i}=-\kappa_i^YY_{,i}.\]
Differentiating (\ref{Xdef}), with respect to $u_i$ and $v_A$, where $u=(u_1,\ldots,u_s)$ and $v=(v_{s+1},v_{s+2},\ldots,v_n)$, we obtain
\[
X_{,i} = (1-Z_1 \kappa_i^Y)Y_{,i}, 
\quad \text{and} \quad 
X_{,A} = Z_{1,A}N^Y + \sum_{r=2}^{n-s+1} Z_{r,A} e_{s+r},
\]
Thus, writing
\[
N^Z = N^Z_1(v)e_{s+1} + \sum_{r=2}^{n-s+1} N_r^Z(v) e_{s+r},
\qquad 
(N^Z_1)^2 + \sum_{r=2}^{n-s+1} (N^Z_r)^2 = 1,
\]
we find that the unit normal field of $X$ is given by
\[
N = N^Z_1(v)N^Y(u) + \sum_{r=2}^{n-s+1} N_r^Z(v) e_{s+r}.
\]
The principal curvatures of $X$ are given by
\[\kappa_j=\frac{-\big<N_{,j}\,,\,X_{,j}\big>}{\big<X_{,j}\,,\,X_{,j}\big>}=\frac{\big<N\,,\,X_{,jj}\big>}{\big<X_{,j}\,,\,X_{,j}\big>},\quad 1\leq j\leq n.\]
Differentiating $N$, with respect to $u_i$ and differentiating $X_{,A}$, with respect to $v_A$, where $u=(u_1,\ldots,u_s)$ and $v=(v_{s+1},v_{s+2},\ldots,v_n)$, we get
\[N_{,i}=N^Z_1(v)N^Y_{,i}=-\kappa_i^YN^Z_1(v)Y_{,i},\quad X_{,AA}=Z_{1,AA}N^Y(u) + \sum_{r=2}^{n-s+1} Z_{r,AA} e_{s+r}.\]
Therefore, for $1 \leq i \leq s$, we have $\kappa_i$ given by (\ref{RTHprincipal}). 
Moreover, we conclude the proof by noting that
\[\kappa_A=\frac{\big<N\,,\,X_{,AA}\big>}{\big<X_{,A}\,,\,X_{,A}\big>}=\frac{\big<N^Z\,,\,Z_{,AA}\big>}{\big<Z_{,A}\,,\,Z_{,A}\big>}=\kappa_A^Z,\quad 1+s\leq A\leq n.\]

\end{proof}

\begin{remark}
    Let $M^n$ be a rotated translational hypersurface. Then, by Proposition \ref{Prop1}, 
we have that the mean curvature of $M^n$ is given by
\begin{eqnarray}
    H = \frac{1}{n}\left( \sum_{i=1}^s \frac{N^Z_1 \kappa^Y_i}{1 - Z_1 \kappa^Y_i} 
      + \sum_{B=s+1}^n \kappa^Z_B \right) 
   = \frac{N^Z_1}{n} \sum_{i=1}^s \frac{\kappa^Y_i}{1 - Z_1 \kappa^Y_i} 
     + \frac{n-s}{n} H^Z.\label{H}
\end{eqnarray}
\end{remark}

The following result provides a relation between the radial mean curvatures of the rotated translational hypersurface and those of the profile and directrix hypersurfaces.

\begin{proposition}\label{Prop2}
Let $M^n$ be a rotated translational hypersurface with directrix hypersurface $M_1^s$ and profile hypersurface $M_2^{\,n-s}$, given by (\ref{eqX}). Then
    \begin{eqnarray}
   A_X= \frac{A_Y}{N_1^Z}-\frac{sZ_1}{N_1^Z}+A_Z,\label{AX}
\end{eqnarray}
\end{proposition}

\begin{proof}
For definition, $\displaystyle{A_X=n\frac{H_{n-1}}{H_n}}$, them
    \begin{eqnarray}
    n\frac{H_{n-1}}{H_n} 
    &=& \sum_{i=1}^n \frac{1}{\kappa_i} 
       = \sum_{i=1}^s \frac{1}{\kappa_i} + \sum_{A=s+1}^n \frac{1}{\kappa_A} 
       = \sum_{i=1}^s \frac{1 - Z_1 \kappa^Y_i}{N^Z_1 \kappa^Y_i} 
         + \sum_{A=s+1}^n \frac{1}{\kappa^Z_A} \nonumber\\
    &=& \frac{1}{N^Z_1}\left(\sum_{i=1}^s \frac{1}{\kappa^Y_i} 
         - \sum_{i=1}^s \frac{Z_1 \kappa^Y_i}{\kappa^Y_i}\right) 
         + (n-s)\frac{H^Z_{n-s-1}}{H^Z_{n-s}} \nonumber\\
    &=& \frac{s}{N^Z_1}\frac{H^Y_{s-1}}{H^Y_s} 
         - \frac{sZ_1}{N^Z_1} 
         + (n-s)\frac{H^Z_{n-s-1}}{H^Z_{n-s}}. \nonumber
\end{eqnarray}
That is, we have (\ref{AX}).
\end{proof}

The following result shows that the minimal hypersurfaces $\Sigma^s\times\mathbb{R}^{n-s}$, where $\Sigma^s$ is a minimal hypersurface in $\mathbb{R}^{s+1}$, are rotated translational hypersurfaces.

\begin{theorem}
Consider the rotated translational hypersurface $M^n$ parameterized by (\ref{eqX}), with the directrix hypersurface $M_1^{s}$ and the profile hypersurface $M_2^{\,n-s}$. Suppose that $M_1^s$ is not totally geodesic. If the profile hypersurface $M_2^{\,n-s}$ is minimal in $\mathbb{R}^{\,n-s+1}$, then $M^n$ is minimal in $\mathbb{R}^{n+1}$ if and only if, up to isometries, $M^n$ is locally a
\[
\Sigma_c^s \times \mathbb{R}^{\,n-s},
\]
where $\Sigma_c^s$ is a minimal hypersurface parallel to $M_1^s$, with a fixed constant $c\in\mathbb{R}$, such that $1-c\kappa_i^Y\neq0$, for $1\leq i\leq s$.

\end{theorem}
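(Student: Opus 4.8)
The plan is to reduce minimality to a single scalar equation via the mean–curvature formula (\ref{H}) and then show that this equation forces the profile to degenerate into a flat factor. Since $M_2^{\,n-s}$ is minimal we have $H^Z=0$, so (\ref{H}) becomes
\[
H=\frac{N^Z_1}{n}\sum_{i=1}^{s}\frac{\kappa^Y_i}{1-Z_1\kappa^Y_i}.
\]
Hence $M^n$ is minimal if and only if $N^Z_1\,\Phi=0$ on $U\times V$, where $\Phi:=\sum_{i=1}^{s}\kappa^Y_i/(1-Z_1\kappa^Y_i)$. I would work on the open set where $N^Z_1\neq0$ — the nondegenerate regime in which the rotated translational construction and Proposition \ref{Prop2} are set up — so that the minimality condition reads $\Phi\equiv0$.

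First I would prove that $Z_1$ is constant. Differentiating $\Phi\equiv0$ with respect to a profile parameter $v_A$, and recalling that each $\kappa^Y_i$ depends only on $u$ while $Z_1$ depends only on $v$, yields
\[
\frac{\partial Z_1}{\partial v_A}\sum_{i=1}^{s}\frac{(\kappa^Y_i)^2}{(1-Z_1\kappa^Y_i)^2}=0.
\]
Because $M_1^s$ is not totally geodesic, there is an open set of $u$ on which some $\kappa^Y_i\neq0$; there the sum is strictly positive, forcing $\partial Z_1/\partial v_A=0$ for every $A$. Thus $Z_1\equiv c$ for a constant $c$, which necessarily satisfies $1-c\kappa^Y_i\neq0$ on the parametrization domain.

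With $Z_1\equiv c$ the profile (\ref{eqZ}) lies in the hyperplane $\{x_1=c\}$ of $P_{t+1}$; being $(n-s)$–dimensional there, it is an open piece of a flat $(n-s)$–plane, so (\ref{eqX}) becomes $X(u,v)=\widetilde Y(u)+\sum_{r=2}^{t+1}Z_r(v)e_{s+r}$ with $\widetilde Y=Y+cN^Y$ parallel to $M_1^s$. Substituting $Z_1=c$ into $\Phi\equiv0$ and using the parallel–curvature relation (\ref{principalParalela}) with displacement $c$ gives $\sum_{i=1}^{s}\widetilde\kappa_i=0$, i.e. $\Sigma_c:=\widetilde Y$ is minimal. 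Since the directions $e_{s+2},\dots,e_{n+1}$ are orthogonal to the $\mathbb{R}^{s+1}$ containing $\Sigma_c$, the image of $X$ is, up to a rigid motion, the product $\Sigma_c^s\times\mathbb{R}^{\,n-s}$. The converse is immediate from (\ref{H}): for such a product the profile is flat ($H^Z=0$, $N^Z_1=\pm1$) and $Z_1\equiv c$, whence $H=\pm\tfrac1n\sum_i\widetilde\kappa_i=\pm\tfrac{s}{n}\widetilde H=0$.

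The main obstacle is controlling the two ways the factored equation $N^Z_1\,\Phi=0$ can hold. The constancy of $Z_1$ is the crux, and it rests entirely on the positivity of $\sum_i(\kappa^Y_i)^2/(1-Z_1\kappa^Y_i)^2$, which is exactly where the hypothesis that $M_1^s$ is not totally geodesic enters. The remaining delicate point is the restriction to $N^Z_1\neq0$: on the complementary locus the profile would be ruled in the $e_1$–direction, the sweep $Y(u)+Z_1N^Y(u)$ would fill an open region of $\mathbb{R}^{s+1}$, and the rotated translational structure would degenerate (the directrix being ``filled in''); one must therefore argue that this locus is excluded in the nondegenerate setting for the stated classification to hold.
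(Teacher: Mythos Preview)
Your proposal is correct and follows essentially the same route as the paper: use the mean-curvature formula (\ref{H}) with $H^Z=0$ to reduce minimality to the vanishing of $\sum_i \kappa^Y_i/(1-Z_1\kappa^Y_i)$, differentiate in the profile variables to force $Z_1\equiv c$ via the non--totally-geodesic hypothesis, and then identify the parallel hypersurface $\widetilde Y=Y+cN^Y$ as minimal by the parallel-curvature relation. The only difference is that you explicitly isolate the factor $N^Z_1$ and discuss the locus $N^Z_1=0$ as a potential obstacle, whereas the paper silently drops this factor and passes directly to $\sum_i \kappa^Y_i/(1-Z_1\kappa^Y_i)=0$; your extra care here is reasonable but does not change the argument.
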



\begin{proof}
    Suppose that the profile hypersurface $M_2^{\,n-s}$ is given by $\displaystyle{Z(v) = \left(Z_1(v), Z_{2}(v), \ldots, Z_{n-s+1}(v)\right)}$
is a minimal hypersurface in $\mathbb{R}^{\,n-s+1}$, hence $H^Z=0$.  
Therefore, by (\ref{H}), $M^n$ is a minimal hypersurface in $\mathbb{R}^{\,n+1}$ if and only if
\[
\sum_{i=1}^s \frac{\kappa^Y_i}{1 - Z_1 \kappa^Y_i} = 0.
\]
Differentiating with respect to $v_A$, we obtain
\[
Z_{1,A}\sum_{i=1}^s \frac{(\kappa^Y_i)^2}{(1 - Z_1 \kappa^Y_i)^2} = 0,\quad s+1\leq A\leq n.
\]
Since $M_1^s$ is not totally geodesic, we conclude that $Z_1(v) = c$ is constant, so that
\[
Z(v) = \left(c, Z_{2}(v), \ldots, Z_{n-s+1}(v)\right)
\]
is a parameterization of $\mathbb{R}^{\,n-s}$.  
Therefore, $M^n$ is a minimal hypersurface in $\mathbb{R}^{\,n+1}$ if and only if
\[
\sum_{i=1}^s \frac{\kappa^Y_i}{1 - c \kappa^Y_i} = 0,
\]
and $Z(v)$ is a parameterization of $\mathbb{R}^{\,n-s}$.  

Thus, we conclude the proof, noting that the principal curvatures of the parallel hypersurface
\[
\Psi(u) = Y(u) + cN^Y(u)
\]
are given by
\[
\lambda_i = \frac{\kappa^Y_i}{1 - c \kappa^Y_i},
\]
where $\kappa^Y_i$ are the principal curvatures of $M_1^s$.

\end{proof}

We now provide the proof of Theorem \ref{teorema1}.

\begin{proof}[Proof of Theorem \ref{teorema1}]
Consider the rotated translational hypersurface $M^n$ locally parameterized by (\ref{eqX}), with the directrix hypersurface $M_1^{\,n-1}$ and the profile being a curve, locally parameterized by $Y$ and $Z(v) = (h(v),v)$, respectively. Thus, we have
\[X(u,v) \,\,=\,\, Y(u) \,\,+\,\, h(v)\, N^Y(u)\,\, +\,\, v\, e_{n+1}.\]

Hence we obtain
\[
N_1^Z = \frac{1}{\sqrt{1+h'^2}}
\quad \text{and} \quad 
\kappa_1^Z = \frac{h''}{\bigl(1+h'^2\bigr)^{3/2}}.
\] 
\begin{enumerate}
    \item [(i)] Suppose that the directrix $M_1^{\,n-1}$ is not totally umbilical.\\
From Proposition \ref{Prop2} with $s=n-1$, we have that $H_{n-1}=0$, if and only if, there exists a constant $C$ such that
\begin{eqnarray}
 \frac{H^Y_{s-1}}{H^Y_s}=C,\quad\text{and}\quad \frac{N_1^Z}{\kappa^Z_1} =(n-1)(h-C) .
\end{eqnarray}
Substituting $N^Z_1$ and $\kappa^Z_1$ into the second equation above, we obtain that $h=h(v)$ satisfies  
\begin{eqnarray}
    (n-1) h''(h-C) = 1 + (h')^2. \label{eqhteo1}
\end{eqnarray}

Therefore, by Lemma \ref{lema1}, there exists a hypersurface of dimension $n-1$, given by $\widetilde{Y}=Y+CN^Y$, with $H^{\widetilde{Y}}_{n-2}=0$ and $N^Y=N^{\widetilde{Y}}$. Hence, the parameterization $X$ of $M^n$ can be rewritten as
\begin{eqnarray}
    X(u,v) \,=\, \widetilde{Y}(u) \,+\, (h(v)-C)\, N^{\widetilde{Y}}(u) \,+\, v\, e_{n+1},\label{XcomYtiu}
\end{eqnarray}
that is, the directrix of $M^n$ is given by $\widetilde{Y}$ and the profile is locally given by $\displaystyle{(h(v)-C,\,v)}$, where $h$ satisfies (\ref{eqhteo1}).

Using Lemma \ref{lemaedo} with constants $\widetilde{C}=0$, $\widetilde{a}=n-1$ and $\widetilde{b}=-(n-1)C$, we conclude that the profile of $M$ is locally parameterized by (\ref{Zteorema1}).

Conversely, suppose that the profile of $M^n$ is locally parameterized by (\ref{Zteorema1}) and that the directrix of $M^n$ is locally parameterized by $\widetilde{Y}$ with $H^{\widetilde{Y}}_{n-2}=0$. \\
Since the curvature of the profile and $N_1^Z$ are given by 
\[
\kappa = \frac{-\cos^n\theta}{C_1(n-1)},\quad\text{and}\quad N^Z_1 = \langle N^Z, e_1 \rangle = -\cos\theta,
\]
where $N^Z$ denotes the unit normal of the profile of $M$, then, by Proposition \ref{Prop2}, we obtain
\[
n N^Z_1 \frac{H_{n-1}}{H_n} 
= -(n-1)C_1 \sec^{\,n-1}\theta 
+ \frac{C_1(n-1)\cos\theta}{\cos^n\theta} 
= 0.
\]
That is, $H_{n-1}=0$.

\item[(ii)] Suppose that the directrix $M_1^{\,n-1}=S^{\,n-1}(r)$, i.e., the $(n-1)$-sphere of radius $r$, with Gauss map $N^Y=rY$.\\
Note that in this case, $X$ is given by
\[X(u,v) \,=\,\frac{r+h(v)}{r} Y(u) \,+\, v\, e_{n+1}.\]
From Proposition \ref{Prop2} with $s=n-1$, we have that $H_{n-1}=0$, if and only if, $h$ satisfies
\begin{eqnarray}
    (n-1)h''(h+r) = 1 + (h')^2. \label{eqhteo1esfera}
\end{eqnarray}
Using Lemma \ref{lemaedo} with constants $\widetilde{C}=0$, $\widetilde{a}=n-1$ and $\widetilde{b}=(n-1)r$, we conclude that the profile of $M^n$ is locally parameterized by (\ref{Zteorema1}).

Conversely, suppose that the profile of $M^n$ is locally parameterized by (\ref{Zteorema1}) and that the directrix of $M^n$ is $\mathbb{S}^{n-1}(r)$. \\
Therefore, proceeding as in the previous case, from Proposition \ref{Prop2}, it follows that $H_{n-1}=0$.
\end{enumerate}

\end{proof}

\begin{remark}
    The rotated translational hypersurfaces $M^n$ with $H_{n-1}=0$ given by Theorem \ref{teorema1} are locally parameterized on an open subset of $\mathbb{R}^n$ where 
\[
1 - C_1 \kappa_i \sec^{\,n-1}\theta \neq 0, \qquad C_1>0,\quad 1 \leq i \leq n-1,
\]
with $\kappa_i$ being the principal curvatures of the directrix of $M$.
\end{remark}

As an immediate consequence of Theorem \ref{teorema1}, we obtain a local classification of all rotated translational hypersurfaces of dimension $n=3$ whose second mean curvature vanishes, that is, $H_2=0$. More precisely, these form a two-parameter family.

\vspace{0.2cm}

\begin{theorem}\label{teorema3}
Let $M^3$ be a rotated translational hypersurface whose profile is a curve. 
Then the second mean curvature of $M^3$ is zero, that is, $H_{2}=0$, if and only if there exist constants $C_1>0$ and $C_2\in\mathbb{R}$ such that $M^3$ can be parameterized by one of the following parameterizations
\begin{enumerate}
    \item [(i)] $X:U_1\subseteq\mathbb{R}^3\to M^3$, where $U_1=\{(u,v,\theta)\,|\,\,1+C_1^{2}K\sec^{4}\theta\neq0\}$,
    \begin{eqnarray}
    X(u,v,\theta) = Y(u,v) + C_1\sec^2\theta N^Y(u,v) + \big(C_2+2C_1\tan\theta\big)e_4,\label{Xminimo}
\end{eqnarray}
where $Y$ is a minimal surface whose Gauss map is $N^Y$ and $K$ denotes the Gaussian curvature of $Y$.
\item [(ii)]  $X:U_2\subseteq\mathbb{R}^3\to M^3$, where $U_2=\{(u,v,\theta)\,|\,\,C_1\sec^2\theta\neq0\}$,
\begin{eqnarray}
    X(u,v,\theta) = \frac{C_1\sec^2\theta}{r}Y(u,v) + \big(C_2+2C_1\tan\theta\big)e_4,\label{Xesfera}
\end{eqnarray}
where $Y$ is the sphere of radius $r$, i.e. $S^{2}(r)$, with Gauss map $N^Y=rY$.
\end{enumerate}
\end{theorem}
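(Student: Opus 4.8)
The plan is to obtain Theorem \ref{teorema3} as the direct specialization of Theorem \ref{teorema1} to $n=3$, supplemented by two explicit computations: evaluating the integral in the profile formula (\ref{Zteorema1}) and substituting the resulting profile into the rotated-translational parameterization (\ref{eqX}). I would first set $n=3$, so that $s=n-1=2$ and the directrix $M_1^{n-1}$ is a surface in $\mathbb{R}^3$. The hypothesis $H_{n-1}=0$ becomes precisely $H_2=0$, while the dichotomy supplied by Theorem \ref{teorema1} reads: either the directrix has vanishing $(n-2)=1$-mean curvature, i.e.\ $H_1=0$ so that $M_1^2$ is minimal, or it is the round sphere $S^2(r)$. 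This is exactly the split into cases (i) and (ii).

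I would then make the profile explicit. Since $n-1=2$, one has $\int\sec^{n-1}\theta\,d\theta=\int\sec^2\theta\,d\theta=\tan\theta$, so the profile curve (\ref{Zteorema1}) becomes $Z(\theta)=(C_1\sec^2\theta,\,C_2+2C_1\tan\theta)$ in the minimal case and $Z(\theta)=(-r+C_1\sec^2\theta,\,C_2+2C_1\tan\theta)$ in the spherical case. Renaming the two directrix parameters $u,v$ and the profile parameter $\theta$, substitution of $Z_1,Z_2$ into $X=Y+Z_1N^Y+Z_2e_4$ from (\ref{eqX}) yields (\ref{Xminimo}) immediately. For the spherical directrix I would use the relation $Y=r\,N^Y$ valid on $S^2(r)$: the term $-r\,N^Y$ then cancels the directrix $Y$, and the parameterization collapses to $X=\tfrac{C_1\sec^2\theta}{r}Y+(C_2+2C_1\tan\theta)e_4$, which is (\ref{Xesfera}).

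It remains to identify the admissible domains. By the remark following Theorem \ref{teorema1} (equivalently, by the condition $1-Z_1\kappa_i^Y\neq0$ built into (\ref{eqX})), the parameterization is regular exactly where $1-C_1\sec^2\theta\,\kappa_i\neq0$ for $i=1,2$, with $\kappa_i$ the principal curvatures of the directrix. In case (i) minimality gives $\kappa_1+\kappa_2=0$ and $\kappa_1\kappa_2=K$, whence
\[\prod_{i=1}^2\bigl(1-C_1\sec^2\theta\,\kappa_i\bigr)=1-C_1\sec^2\theta(\kappa_1+\kappa_2)+C_1^2\sec^4\theta\,\kappa_1\kappa_2=1+C_1^2K\sec^4\theta.\]
Since a product of two real factors vanishes if and only if one of them does, the two separate nondegeneracy conditions collapse to the single condition defining $U_1$. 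In case (ii) the rescaled form shows that regularity is equivalent to $C_1\sec^2\theta\neq0$, giving $U_2$. The converse implications are already contained in the converse part of Theorem \ref{teorema1}, so no new argument is needed there.

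The main obstacle is the bookkeeping for $U_1$: one has to invoke minimality of the directrix to merge the two nondegeneracy conditions $1-C_1\sec^2\theta\,\kappa_i\neq0$ into the single clean inequality $1+C_1^2K\sec^4\theta\neq0$. Everything else is a routine transcription of Theorem \ref{teorema1} to the case $n=3$ together with the elementary integral $\int\sec^2\theta\,d\theta=\tan\theta$.
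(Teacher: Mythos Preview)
Your proposal is correct and follows exactly the route the paper indicates: the paper states Theorem \ref{teorema3} as ``an immediate consequence of Theorem \ref{teorema1}'' without giving a separate proof, and your argument is precisely that specialization to $n=3$ together with the evaluation $\int\sec^2\theta\,d\theta=\tan\theta$ and substitution into (\ref{eqX}). Your derivation of the domain $U_1$ via the identity $\prod_i(1-C_1\sec^2\theta\,\kappa_i)=1+C_1^2K\sec^4\theta$ for a minimal directrix is in fact more explicit than anything the paper writes down, and your treatment of the spherical case (using $N^Y=Y/r$ to collapse $Y+(-r+C_1\sec^2\theta)N^Y$ to $\tfrac{C_1\sec^2\theta}{r}Y$) is exactly what underlies the paper's formula (\ref{Xesfera}).
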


\begin{remark}
    If the directrix hypersurface is the $s$-dimensional sphere $S^{s}(r)$ of radius $r$, then the behavior of the rotated translational hypersurfaces depends on the dimension of the profile.
    \begin{itemize}
    \item When the profile is a curve (that is, $t=1$), the rotated translational hypersurfaces is precisely a hypersurface of revolution.    
    \item When the profile has dimension $t \ge 2$, the immersion remains invariant
under the rotation group $O(s{+}1)$ acting on the spherical directions. 
Geometrically, $M$ is a warped product of the form
\[
    M = S^{s} \times_{\rho} M^{t}_{2},
\]
with warping function
\[
    \rho(u,v) = \frac{r + Z_{1}(v)}{r}.
\]
\end{itemize}

\end{remark}

\begin{remark}
    Let $M^3$ be a rotated translational hypersurface whose profile is a surface $M_2^2$. 
Then the second mean curvature of $M^3$ vanishes, i.e., $H_2=0$, if and only if there exists a constant $C>0$ such that the directrix of $M^3$ is a circle of radius $C$, and the radial mean curvature of the profile $M_2^2$ satisfies
\begin{eqnarray}
   N_1^Z A_Z = Z_1 - C,\label{perfilsuperficie} 
\end{eqnarray}

where $N^Z$ denotes the Gauss map of $M_2^2$.\\

Moreover, if $M_2^2$ is either a CPD surface, a translational surface, or a harmonic surface of graphic type, then $M_2^2$ is given by Theorems \ref{TeoremaCPD}, \ref{Teorematran}, and \ref{Teoremaharmonico}, for the constants $a = 1$ and $b = -C$.\\

If the rotated translational hypersurface $M^n$, with dimension $n>3$, 
has the profile surface $M_2^2$, then $H_{n-1}=0$ if and only if either there exists 
a hypersurface parallel to the directrix of $M^n$ whose $(n-3)$-mean curvature 
vanishes, that is, $\widetilde{H}_{n-3}=0$, or the directrix of $M^n$ is an 
$(n-2)$-dimensional sphere of radius $r$, and the radial mean curvature of the 
profile satisfies (\ref{perfilsuperficie}).

\end{remark}

\vspace{0.3cm}

\begin{remark}
    Denote by $X^{(3)}(u,v,\theta_1)$ the parameterization of the $3$-dimensional hypersurface given by Theorem \ref{teorema3}, and by $N^{(3)}$ its Gauss map. Then, by Theorem \ref{teorema1}, we obtain a rotated translational hypersurface $M^{4}$, parameterized by $X^{(4)}(u,v,\theta_1,\theta_2)$ with $H_3=0$, whose directrix is $X^{(3)}(u,v,\theta_1)$ and whose profile is a curve, given by
\[X^{(4)}(u,v,\theta_1,\theta_2)=X^{(3)}(u,v,\theta_1)+C_1\sec^3\theta_2\,N^{(3)}+\frac{3C_1}{2}\big(\sec\theta_2\tan\theta_2+\ln(\sec\theta_2+\tan\theta_2)\big)e_{5}.\]
Thus, using $X^{(4)}(u,v,\theta_1,\theta_2)$ as given above, and assuming that $N^{(4)}$ is its Gauss map, we can apply Theorem \ref{teorema1} once again to obtain a $5$-dimensional rotated translational hypersurface $X^{(5)}(u,v,\theta_1,\theta_2,\theta_3)$ with $H_4 = 0$.

Precisely, the Theorem \ref{teorema1} provides us with a recursive process to construct $m$-dimensional rotated translational hypersurface $X^{(m)}$, such that $H_{m-1} = 0$
\[X^{(m)}=X^{(m-1)}+C_1\sec^{m-1}\theta_{m-2}N^{(m-1)}+(m-1)C_1\int\sec^{m-1}\theta_{m-2}d\theta_{m-2}\,e_{m+1},\]
where $N^{(m-1)}$ denotes the Gauss map of $X^{(m-1)}$.

\end{remark}

\vspace{0,3cm}

Next, we provide the proof of Theorem \ref{teorema2}, which also gives a recursive procedure
to construct rotated translational hypersurfaces with the prescription of the
radial mean curvature in terms of the height and angle functions.\\

\begin{proof}[Proof of Theorem \ref{teorema2}]
 Let $\displaystyle{X(u,v) \,=\, Y(u) \,+\, Z_1(v) N^Y(u)\, + \sum_{r=2}^{n-s+1}Z_r(v) e_{s+r}}$, thus, by Proposition \ref{Prop2},
\[
N^{X}_{1}A_{X} \;=\; aX_{1}+b
\]
if and only if
\begin{eqnarray}
    aX_1+b=N_1^X\left(\frac{A_Y}{N_1^Z}+A_Z-\frac{sZ_1}{N_1^Z}\right).\label{eqX1Y1Z1}
\end{eqnarray}
Since $X_{1}=Y_{1}+Z_{1}N_{1}^{Y}$ and $N_{1}^{X}=N_{1}^{Y}N_{1}^{Z}$, then (\ref{eqX1Y1Z1}) is equivalent to
\[-N_1^YA_Y+aY_1+b=N_1^Y\big(-sZ_1+N_1^ZA_Z-aZ_1\big).\]
Noting that $-N_{1}^{Y}A_{Y}+aY_{1}+b$ and $N_{1}^{Y}$ depend on $u=(u_{1},\ldots,u_{s})$, 
while $-sZ_{1}+N_{1}^{Z}A_{Z}-aZ_{1}$ depends on $v=(u_{s+1},\ldots,u_{n})$, 
there exists a constant $C \in \mathbb{R}$ such that
\begin{eqnarray}
     N^{Y}_{1}(A_Y+C) = aY_{1}+b, \quad\text{and}\quad N^{Z}_{1}A_{Z} = (a+s)Z_{1}+C.\nonumber
 \end{eqnarray}
Therefore, by Lemma \ref{lema2} defining $\widetilde{Y} \;=\; Y - \frac{C}{s+a}\,N^{Y}$, the proof is complete.
\end{proof}

\begin{remark}
    By Theorem \ref{teorema2}, in order to construct an $n$-dimensional rotated translational hypersurface $M^n$ parametrized by (\ref{eqX}) and with directrix and profile given by (\ref{eqY}) and (\ref{eqZ}), respectively
  such that 
\[
N_{1}^{X}A_{X} \;=\; aX_{1}+b,
\] 
it suffices to know two hypersurfaces $\widetilde{Y}$ and $Z$ of dimensions $s$ and $n-s$, respectively, such that 
\[
N_{1}^{\widetilde{Y}}A_{\widetilde{Y}} \;=\; a\widetilde{Y}_{1}+b
\quad\text{and}\quad
N_{1}^{Z}A_{Z} \;=\; (a+s)Z_{1}+c.
\]
To determine the hypersurfaces $\widetilde{Y}$ and $Z$, it is enough to apply Theorem \ref{teorema2} to each of them.  
We can keep applying Theorem \ref{teorema2} until $\widetilde{Y}$ and $Z$ are both plane curves.

\end{remark}

\begin{example}
In this example, we construct a $3$-dimensional hypersurface $X$ satisfying 
\[
N_{1}^{X}A_{X} \;=\; aX_{1}+b.
\] 
Suppose that 
\[
X(u_{1},u_{2},u_{3}) \;=\; \widetilde{Y}(u_{1},u_{2}) \,+\, \left(h(u_{3})+\tfrac{C_1}{2+a}\right)N^{\widetilde{Y}} \,+\, u_{3}e_{4}.
\] 
By Theorem \ref{teorema2}, we have 
\[
N_{1}^{\widetilde{Y}}A_{\widetilde{Y}} \;=\; a\widetilde{Y}_{1}+b,
\]
and $h$ satisfies
\begin{eqnarray}
    1+h'^{2} \;=\; h''\bigl((2+a)h+C_1\bigr).\label{eqXexh}
\end{eqnarray}

Now suppose that 
\[
\widetilde{Y}(u_{1},u_{2}) \;=\; \alpha(u_{1}) \,+\, \left(f(u_{2})+\tfrac{C_{2}}{1+a}\right)N^{\alpha} \,+\, u_{2}e_{3},
\] 
where $\alpha(u_{1})$ is a curve which, for simplicity, we assume to be of graph type, that is, 
\[
\alpha(u_{1}) = \bigl(g(u_{1}),\,u_{1}\bigr).
\] 
In this case, 
\[
N^{\alpha} \;=\; \frac{1}{\sqrt{1+g'^{2}}}\,\bigl(1,\,-g'(u_{1})\bigr).
\]
Using Theorem \ref{teorema2} again, we have that 
\[
N_{1}^{\widetilde{Y}}A_{\widetilde{Y}} \;=\; a\widetilde{Y}_{1}+b,
\] 
if and only if $g$ and $f$ satisfy, respectively, 
\begin{eqnarray}
    1+g'^{2} \;=\; g''(ag+b) 
\quad \text{and} \quad 
1+f'^{2} \;=\; f''\bigl((a+1)f+C_{2}\bigr),\label{eqXexfeg}
\end{eqnarray}
since in this case 
\[
A_{\alpha}=\frac{1}{\kappa^{\alpha}}
\quad \text{and} \quad 
A_{Z}=\frac{1}{\kappa^{Z}},
\]
where $Z=(f(u_{2}),u_{2})$.\\

Therefore, we have constructed a $3$-dimensional hypersurface $X$ that satisfies
\[
N_{1}^{X}A_{X} \;=\; aX_{1}+b,
\]
given by
\[
X(u_{1},u_{2},u_{3})
 \;=\; \alpha(u_{1})
 \,+\, \Bigl(f(u_{2})+\tfrac{C_{2}}{1+a}\Bigr)N^{\alpha}
 \,+\, u_{2}e_{3}
 \,+\, \Bigl(h(u_{3})+\tfrac{C_1}{2+a}\Bigr)N^{\widetilde{Y}}
 \,+\, u_{3}e_{4},
\]
where $\alpha(u_{1})=(g(u_{1}),u_{1})$, 
\[
N^{\alpha}=\frac{1}{\sqrt{1+g'^{2}}}\,(1,-g'),\quad\text{and}\quad N^{\widetilde{Y}}=\frac{-N^{\alpha}+f'e_3}{\sqrt{1+f'^{2}}}
\]
where the functions $g, f, h$, given by Lemma \ref{lemaedo}, which depend on the constants $(\widetilde{C}, \widetilde{a}_1, \widetilde{b}_1)$, are obtained by using, respectively, $(0,a,b)$, $(0,a+1,C_2)$, and $(0,a+2,C_1)$.
\end{example}

\vspace{0.2cm}

\bigskip

\end{document}